\renewcommand{\Re}{\mathbb{R}}
\newcommand{\half}{\frac{1}{2}}
\newcommand{\weak}{\rightharpoonup}
\newcommand{\embed}{\hookrightarrow}
\newcommand{\vph}{\vphantom{A^{A}_{A}}}
\newcommand{\sst}{\,\mid\,}
\newcommand{\eqnref}[1]{(\ref{eqn#1})}
\newcommand{\uhat}{\hat{u}}
\newcommand{\vhat}{\hat{v}}
\newcommand{\Omegabar}{\bar{\Omega}}
\newcommand{\ut}{\tilde{u}}
\newcommand{\bfg}{{\bf g}}
\newcommand{\bfn}{{\bf n}}
\newcommand{\bfp}{{\bf p}}
\newcommand{\bfq}{{\bf q}}
\newcommand{\bfG}{{\bf G}}
\newcommand{\calL}{{\cal L}}
\newcommand{\calP}{{\cal P}}
\newcommand{\calT}{{\cal T}}
\newcommand{\norm}[1]{\| {#1} \|}
\newcommand{\Hone}{{H^1(\Omega)}}
\newcommand{\hone}[1]{\norm{#1}_\Hone}
\newcommand{\Hdiv}{{H(\Omega;div)}}
\newcommand{\Lone}{{L^1(\Omega)}}
\newcommand{\lone}[1]{\norm{#1}_\Lone}
\newcommand{\Ltwo}{{L^2(\Omega)}}
\newcommand{\ltwo}[1]{\norm{#1}_\Ltwo}
\newcommand{\Linf}{{L^\infty(\Omega)}}
\newcommand{\Lp}{{L^p(\Omega)}}
\newcommand{\lp}[1]{\norm{#1}_\Lp}
\newcommand{\Lq}{{L^q(\Omega)}}
\newcommand{\rmdiv}{\,\mathrm{div}}
\newcommand{\delTh}{\partial \calT_h}
\newcommand{\Honeh}{{H^1(\calT_h)}}
\newcommand{\honeh}[1]{\norm{#1}_\Honeh}
\newcommand{\Ltwoh}{{L^2(\calT_h)}}
\newcommand{\ltwoh}[1]{\norm{#1}_\Ltwoh}
\newcommand{\mbfG}{{\mathbf G}}
\newcommand{\Lr}{{L^r(\Omega)}}
\newcommand{\calPell}{\calP_\ell}
\newcommand{\lranglenorm}[1]{\langle{#1}, {#1}\rangle}
\newcommand{\lranglenormElement}[1]{\|{#1}\|_{L^2(\partial K)}}
\begin{document}

\bibliographystyle{siam}

\title{Stability and Convergence of HDG Schemes Under Minimal Regularity}

\author{Jiannan Jiang\thanks{Department of Mathematical Sciences, Carnegie
    Mellon University, Pittsburgh, PA 15213.} \and
  Noel J. Walkington\thanks{Department of Mathematical Sciences,
    Carnegie Mellon University, Pittsburgh, PA 15213. Supported in
    part by National Science Foundation Grant DMS--2012259.  This work was
    also supported by the NSF through the Center for Nonlinear
    Analysis.} \and
    Yukun Yue\thanks{Department of Mathematical Sciences, University 
    of Wisconsin-Madison, Madison, WI, 53706.}
}

\date{\today}
\maketitle

\begin{abstract}
  Convergence and compactness properties of approximate solutions to
  elliptic partial differential equations computed with the hybridized
  discontinuous Galerkin (HDG) scheme of Cockburn, Gopalakrishnan, and
  Sayas (Math. Comp., 79 (2010), pp.~1351--1367) are
  established. While it is known that solutions computed using this
  scheme converge at optimal rates to smooth solutions, this does not
  establish the stability of the method or convergence to solutions
  with minimal regularity. The compactness and convergence results
  show that the HDG scheme can be utilized for the solution of
  nonlinear problems and linear problems with non--smooth coefficients
  on domains with reentrant corners.
\end{abstract}

\begin{keywords}
HDG scheme, stability, convergence, finite element.
\end{keywords}

\begin{AMS} 65N12, 65N30 \end{AMS}

\section{Introduction}
Among the vast number of discontinuous Galerkin (DG) schemes for
elliptic equations \cite{ArBrCoMa02,castillo2000priori,CoBeKa05,Co00,CoSh98,HeWa08,Ri08,WaYe13,WaYe14},
the hybridized discontinuous Galerkin (HDG) scheme of Cockburn
et al. \cite{cockburn2004characterization,CoGoSa10,cockburn2012conditions,CoSh98,efendiev2015spectral,FuQi15,HeWa08,Hu20,li2016analysis,sevilla2016tutorial}
is one of the few schemes satisfying the following desirable
properties:
\begin{itemize}
\item The variational structure is preserved in the sense that the
  approximation of a self adjoint operator is self adjoint; this is
  essential for approximating wave equations or gradient flows.

\item The stability (penalty) parameters for the HDG scheme are
  explicit. For many DG schemes, parameters depend upon obscure
  quantities such as Poincar\'e  or trace constants of the domain.

\item The scheme exhibits optimal approximation of smooth solutions,
  and improved rates for lower order norms can be established
  using duality arguments upon post--processing.
  
\item Hybridizing guarantees that HDG schemes have complexity
  comparable to continuous Galerkin and other DG schemes.

\item Piecewise constant approximations of both the function and its
  gradient are admissible. This is very useful for constructing
  conforming approximations of nonlinear problems of the form
  $u: \Omega \subset \Re^d \to \Re$,
  $$
  e - \Delta u = f,
  \qquad e \in \partial \psi(u),
  $$
  where $\psi:\Re \rightarrow \Re$ is convex, and minima of the
  corresponding variational problem;
  $$
  I(u) = \int_\Omega \Psi(u,\nabla u),
  \quad \text{ with } \quad
  \Psi(u,\bfp) = \psi(u) + |\bfp|^2 - f u.
  $$

  Euler approximations of the classical Stefan problem are a
  prototypical example of such equations \cite{BeMaHeRo12,RuWa96}.
\end{itemize}
In \cite{CoGoSa10,cockburn2012conditions} 
Cockburn et al. proved that if the solution of the underlying 
elliptic equation is smooth, then their HDG approximations converge 
at optimal rates. It is well known that convergence to
smooth solutions does not guarantee stability \cite{BaNa97}, and
prototypically solutions of nonlinear problems are not smooth.
Cockburn et al. did not consider stability, and stability for their
HDG scheme is not addressed in texts such as \cite{Gu21}.

Below trace, compactness, and convergence properties of the HDG
spaces are developed and used to establish stability and convergence
of HDG approximations of the canonical linear elliptic problem;
\begin{equation} \label{eqn:pde}
-\rmdiv(A \nabla u) = f, \quad \text{ in  $\Omega \subset \Re^d$},
\qquad u|_{\Gamma_0} = u_0,
\qquad A \nabla u.\bfn|_{\Gamma_1} = g,
\end{equation}
where $\partial \Omega = \Gamma_0 \cup \Gamma_1$ is a partition of the
boundary of $\Omega$ and $A:\Omega \rightarrow \Re^{d \times d}$ is
elliptic. Specifically, if $\calT_h$ is a triangulation of $\Omega$,
and $\calPell(\calT_h)$ and $\calPell(\delTh)$ denote the spaces of
(discontinuous) piecewise polynomials of degree $\ell \geq 0$ over
$\calT_h$ and the $d-1$ faces of $\calT_h$ respectively, the HDG
approximations take the (mixed) form,
$(u_h, \uhat_h, \bfp_h) \in \calPell(\calT_h) \times \calPell(\delTh)
\times \calPell(\calT_h)^d$,
\begin{subequations} \label{eqn:wp}
\begin{gather}
  \langle u_h-\uhat_h, v_h-\vhat_h \rangle
  - \left(\rmdiv(\bfp_h), v_h \vph\right)
  + \langle \bfp_h.\bfn, \vhat_h \rangle
  = (f,v_h) + \langle g, \vhat \rangle_{\Gamma_1} \label{eqn:wpU} \\
  \left(A^{-1} \bfp_h, \bfq_h \vph\right)
  + \left(\rmdiv(\bfq_h), u_h \vph\right)
  - \langle \bfq_h.\bfn, \uhat_h \rangle
  = 0, \label{eqn:wpP} 
\end{gather}
\end{subequations}
for all
$(v_h, \vhat_h, \bfq_h) \in \left\{ \calPell(\calT_h) \times
  \calPell(\delTh) \times \calPell(\calT_h)^d \sst \vhat_h|_{\Gamma_0} = 0
  \right\}$, and
\begin{equation}
  \langle \uhat_h, \vhat_h \rangle_{\Gamma_0}
  = \langle u_0, \vhat_h \rangle_{\Gamma_0}, \label{eqn:wpUo}
\end{equation}
for all $\vhat_h \in \calPell(\delTh \cap \Gamma_0)$. In the above
\begin{equation} \label{eqn:pairings}
(u,v) \equiv \sum_{K \in \calT_h} \int_K u \, v, \qquad
\langle u,v \rangle \equiv \sum_{K \in \calT_h}
\int_{\partial K} u \, v, \qquad
\langle u,v \rangle_{\Gamma_0} \equiv \sum_{k \in \delTh \cap \Gamma_0}
\int_k u \, v,
\end{equation}
are broken $L^2$ pairings. As is common for mixed formulations, we assume
that $f \in \Ltwo$ and $g \in L^2(\Gamma_1)$ which are slightly more
regular than the usual dual spaces used for the primal problem.  We
subsequently show that, when $A$ is symmetric, the HDG scheme for
both linear and nonlinear elliptic equations can be realized
as the Euler Lagrange equation of a variational problem.

As outlined above, two of the distinguishing features of the particular
HDG scheme \eqnref{:wp} are:
\begin{itemize}
\item Piecewise constant functions, $\ell=0$, are admissible.
\item The stabilization constant is explicit; specifically, the first
  term on the left of \eqnref{:wpU} has unit weight.
\end{itemize}
Frequently the stronger weight $1/h$ is utilized for the stabilization
term
\cite{Gu21,buffa2009compact,chen2023superconvergence,kirk2019analysis},
and this enters the analysis in an essential fashion. However, when
all variables are approximated with polynomials of the same degree,
only the constant weight yields an optimal rate of convergence. The
following example illustrates this for the piecewise constant HDG
scheme.

\begin{figure}
  \hspace*{-0.375in}
  \begin{tabular}{|l|cc|cc|cc|} \hline
    & \multicolumn{2}{c|}{\text{weight $\tau = 1$}}
    & \multicolumn{2}{c|}{\text{weight $\tau = h^{-1}$}}
    & \multicolumn{2}{c|}{\text{weight $\tau = h$}}
    \\ \hline
    \multicolumn{1}{|c|}{$h$}
    & $\ltwoh{u-u_h}$  & $\ltwoh{\bfp-\bfp_h}$
    & $\ltwoh{u-u_h}$  & $\ltwoh{\bfp-\bfp_h}$ 
    & $\ltwoh{u-u_h}$  & $\ltwoh{\bfp-\bfp_h}$
    \\ \hline
    1/4 & 1.458986 & 2.703363 & 0.377034 & 2.678867 & 5.835746 & 2.745128 \\
    1/8 & 0.743051 & 1.293497 & 0.129315 & 1.483995 & 5.961673 & 1.298312 \\
    1/16 & 0.354562 & 0.611884 & 0.093955 & 1.005111 & 5.695665 & 0.610560 \\
    1/32 & 0.178083 & 0.311185 & 0.093149 & 0.874985 & 5.721947 & 0.310110 \\
    1/64 & 0.088250 & 0.155298 & 0.093290 & 0.838297 & 5.672201 & 0.154572 \\
    1/128 & 0.044529 & 0.077636 & 0.093307 & 0.828456 & 5.725109 & 0.077232 \\
    \hline
  \end{tabular}
  \caption{Piecewise constant HDG approximation of
  solution to Laplace's equation with different weights.}
\label{fig:hdg0}
\end{figure}

\begin{example} \label{eg:smooth}
  To illustrate the convergence properties of the piecewise constant
  HDG approximations, we consider a manufactured solution
  $u(x,y) = \cos(2\pi x) \cos(2\pi y)$ on $\Omega = [0,1]^2$ of
  Laplace's equation with Dirichlet data on the top and bottom
  boundaries and Neumann data on the sides. Solutions were computed
  with various weightings of the stabalization term in \eqnref{:wpU},
  $$
  \tau \langle u_h-\uhat_h, v_h-\vhat_h \rangle
  - \left(\rmdiv(\bfp_h), v_h \vph\right)
  + \langle \bfp_h.\bfn, \vhat_h \rangle
  = (f,v_h) + \langle g, \vhat \rangle_{\Gamma_1}.
  $$
  Figure \ref{fig:hdg0} tabulates the errors for solutions computed on
  uniform meshes with weights $\tau = 1$, $1/h$, and $h$. It is clear that
  both $u_h$ and $\bfp_h$ converge only when the unit weight is
  utilized.
\end{example}

\subsection{Related Results}
Discontinuous Galerkin (DG) methods for elliptic problems have been
intensively studied for the past half--century, giving rise to a vast variety of schemes 
and an extensive body of literature. For this
reason, only a brief summary and closely related results are reviewed
here. Originally DG schemes were constructed using a primal formulation
of the elliptic equation \cite{Ar82,Gu21,OdBuBa98,Ri08,RiWh99,WaYe13}, and,
motivated by DG schemes for hyperbolic problems, many subsequent
developments utilized a mixed formulation involving the interelement
fluxes \cite{BrFo91, CoSh98,CoBeKa05,Co00,WaYe14}. The
function spaces for these schemes are piecewise polynomial subspaces
of $\Ltwo$ over a triangulation $\calT_h$ of the domain, and a unified
analysis of these schemes was developed in \cite{ArBrCoMa02}. 
DG schemes have been utilized and analysed for a vast number of problems
including, for example, 
Maxwell's equations 
\cite{cockburn2004locally,du2020unified}, 
Boltzmann equations 
\cite{cheng2011brief,chung2020generalized,jaiswal2019discontinuous},
multiscale problems 
\cite{efendiev2009multiscale, hou2015sparse, efendiev2022efficient},
and phase-field problems \cite{chen2023superconvergence, kay2009discontinuous,xia2007local}, among others.

The HDG scheme introduced by Cockburn et
al. \cite{nguyen2010hybridizable,cockburn2016static,cockburn2009unified,cockburn2004characterization}
is a mixed formulation of the equation with an additional scalar
valued variable defined on the $d-1$ skeleton of the triangulation
which plays the role of a trace of the solution.  In contrast with
other schemes that express the interelement flux as functions (jumps,
averages, etc.) of the variables in the adjacent elements, the HDG
fluxes on an element boundary depend only upon the variables
associated with that element and the traces on the skeleton.  For
linear problems, static condensation at the element level results in a
linear system of equations for the traces.

Solutions of elliptic equations are typically in $\Hone$, and it is
necessary to introduce penalty terms in DG formulations in order to
recover essential properties of $\Hone$ required for continuity and
coercivity of the bilinear functions and convergence under mesh
refinement.  In a series of papers, Brenner systematically developed
Poincar\'e, trace, and Korn inequalities for discontinuous finite
element spaces \cite{Br03,Br04}. A key observation was
that in many instances these inequalities are held under a bound on the
jumps on inter--element faces of the form
\begin{equation} \label{eqn:edgeWeights}
\sum_{k \in \delTh} \frac{1}{h^d} \left(\int_k [u] \right)^2 \leq C,
\qquad \text{ as opposed to } \qquad
\sum_{k \in \delTh} \frac{1}{h} \left(\int_k [u]^2 \right) \leq C,
\end{equation}
which is commonly assumed \cite{Gu21,buffa2009compact}. This distinction is
essential for the analysis below of the HDG scheme \eqnref{:wp}
for which the penalty term is unweighted.

Embedding and compactness properties of Sobolev spaces are needed for
the existence theory for nonlinear elliptic equations and variational
problems. Buffa and Ortner \cite{buffa2009compact} develop general
embedding and compactness results for spaces of discontinuous
functions. However, these are all developed under the assumption that
the jump term on the right of equation \eqnref{:edgeWeights} is
bounded, and this is essential for many of their estimates. Below we
establish embedding and compactness properties of discontinuous spaces
with the penalty term having unit weight as in the HDG scheme. When the
polynomials have degree $\ell \geq 1$, our estimates are not as sharp as
those in the continuous setting.  Specifically, while
$\Hone \embed \Lp$ with $p \leq 2 d / (d-2)$ ($p < \infty$ if $d=2$)
and compact if the inequality is strict, we show solutions of the HDG
scheme are bounded in $\Lp$ with $p \leq 2d/(d-1)$ and compact if the
inequality is strict. In addition, the trace operator
$\gamma:\Hone \rightarrow L^p(\partial\Omega)$ is continuous with
$p \leq 2(d-1)/(d-2)$ and compact if the inequality is strict;
however, in the HDG context, we only show continuity into
$L^2(\partial\Omega)$.

The distributional gradient of a discontinuous function consists of
the Lebesgue part (a.k.a. the ``broken'' gradient) and a term computed
from the jumps. When approximating nonlinear and variational problems
with DG schemes it is necessary to construct a piecewise polynomial
approximation of distributional gradients in $\Ltwo$. This was done
in \cite{buffa2009compact,FeLeNe16} for the classical DG spaces, and
for other DG schemes in \cite{Gu21}. The approximation of the
distributional gradient we introduce for the analysis of the HDG
scheme ($\bfG_h(u_h,\uhat_h) \in \calPell(\calT_h)^d$ below) is similar
to the approximations appearing in \cite{WaYe13}, but differs from the
one considered by \cite{Gu21} in that we do not require it to be the
broken gradient of a function in $\calP_{\ell+1}(\calT_h)$.

\subsection{Overview}
Section \ref{sec:pre} begins with an outline of the notations required
for the analysis of DG schemes. To establish global properties,
such as the Poincar\'e inequality, we embed the DG spaces into the
Crouzeix Raviart (CR) space. Section \ref{sec:pre} recalls the
essential properties CR spaces required below. The analysis for the
higher order HDG schemes may be viewed as a technical generalization
of the piecewise constant HDG scheme, so this simpler case is
considered separately in Section \ref{sec:hdg0}, and the general case
is analyzed in Section \ref{sec:hdg}. These sections may be read
independently. Section \ref{sec:hdv} develops the convergence theory
for HDG approximations of variational statements of nonlinear elliptic
equations.

Since convergence to smooth solutions at optimal rates has been
established by Cockburn et al.
\cite{CoGoSa10,cockburn2012conditions}, we consider the
stability and convergence of solutions of the HDG scheme assuming only
that solutions to the continuous problem are in $\Hone$. This shows
that HDG methods can be utilized for nonlinear problems exhibiting
fronts and linear problems on domains with reentrant corners or
non--smooth coefficients.

\section{Preliminaries} \label{sec:pre}
\subsection{Notation}
Below $\Omega \subset \Re^d$ will denote a connected bounded domain with
a Lipschitz boundary in dimension $d=2$ or $3$ and
$\partial \Omega = \Gamma_0 \cup \Gamma_1$ denotes a partition of the
boundary. Standard notation is adopted for the Lebesgue spaces, $\Lp$,
and the Sobolev space $\Hone$. The coefficient
$A \in \Linf^{d \times d}$ will assumed to be uniformly elliptic; that
is there exists $0 < c \leq C$ such that
$$
c |\xi|^2 \leq \xi^\top A(x) \xi\leq C |\xi|^2,
\qquad \xi \in \Re^d, \,\, x \in \Omega.
$$
Triangulations of $\Omega$ will be parameterized by the diameter $h$
of their largest simplex. To avoid unnecessary technicalities, we
assume that the restriction of $\calT_h$ to $\partial \Omega$ induces
triangulations of the boundary partition
$\partial \Omega = \Gamma_0 \cup \Gamma_1$.  Constants appearing in
the estimates below may depend upon the ellipticity constants, the
domain $\Omega$, the boundary partition
$\partial\Omega = \Gamma_0 \cup \Gamma_1$, and degree $\ell$ of
polynomials utilized in the HDG scheme. The aspect ratio of a simplex
is the ratio of radius of the largest inscribing sphere to diameter.
Typically the constants will also depend upon the maximal aspect ratio
of simplices in the mesh (the mesh aspect ratio), and this will be
explicitly stated. In particular, the constants for a regular family
of meshes \cite{BrSc08,Ci78} will be bounded independently of $h$.

The broken Lebesgue and Sobolev spaces and piecewise polynomial spaces
of degree $\ell \geq 0$ over $\calT_h$ are denoted
\begin{eqnarray*}
  \Ltwoh
  &=& \{u \in \Ltwo \sst u|_K \in L^2(K), \,\, K \in \calT_h \}, \\
  \Honeh
  &=& \{u \in \Ltwo \sst u|_K \in H^1(K), \,\, K \in \calT_h \}, \\
  \calPell(\calT_h)
  &=& \{u_h \in \Ltwo \sst u_h|_K \in \calPell(K), \,\, K \in \calT_h \},
\end{eqnarray*}
and we write
$$
\norm{u}_{\Honeh}^2 = \ltwo{u}^2 + \ltwoh{\nabla u}^2
= \sum_{K \in \calT_h} \norm{u}^2_{L^2(K)} + \sum_{K \in \calT_h} \norm{\nabla u}^2_{L^2(K)}
= \sum_{K \in \calT_h} \norm{u}^2_{H^1(K)},
$$
where the $d$--simplicies in $\calT_h$ are indexed as $K \in \calT_h$. 
The $d-1$ sub--complex of $\calT_h$ (edges 2d, faces 3d) is denoted by
$\delTh$, and the broken spaces $L^2(\delTh)$ and $\calPell(\delTh)$
are the analogs of their $d$ dimensional counterparts, and the
$d-1$--simplicies in indexed as $k \in \delTh$.

The outward directed normal at the boundary of a simplex $K \in \calT_h$ is
denoted by $\bfn$, and the dot product with $\bfp \in \Re^d$ is denoted
by $\bfp.\bfn$. The notation in equation \eqnref{:pairings} is used to
denote $L^2$ pairings of both scalar and vector valued functions. If
$u_h \in \Honeh$ then $\nabla u_h$ will denote the broken (not
distributional) gradient, and similarly if $\bfp_h \in \Honeh^d$ then
$\rmdiv(\bfp_h)$ denotes the broken divergence. Functions in these
spaces are double valued on the internal faces of $\calT_h$, and
integrals over the boundaries $\partial K$ of simplices
$K \in \calT_h$ always utilizes the trace from the interior of $K$.

\subsection{Crouzeix Raviart Space}
The Crouzeix Raviart space over a triangulation $\calT_h$ of a domain
$\Omega \subset \Re^d$ is
$$
CR(\calT_h) = \left\{u_h \in \calP_1(\calT_h) \sst
\int_k [u_h] = 0, \,\, k \in \delTh \cap \Omega \right\},
$$
where $[u_h]$ denotes the jump of $u_h$ across an internal face. The
degrees of freedom are the average value of the function on the faces
which coincides with the value at the face centroids.  The natural
lifting of functions on $\delTh$ to $CR(\calT_h)$ takes the
degrees of freedom to be the averages on each face.

\begin{definition} \label{def:Lh}
  The lifting  $\calL_h: L^1(\delTh) \rightarrow CR(\calT_h)$ is the
  function for which $\uhat_h = \calL_h(\uhat)$ has degrees of freedom
  $$
  \int_k \uhat_h = \int_k \uhat, \qquad k \in \delTh.
  $$
\end{definition}

Integration by parts gives the following lemma that characterizes the
gradients of functions in $CR(\calT_h)$.

\begin{lemma} \label{lem:crGrad}
  Let $\calT_h$ be a triangulation of a domain $\Omega \subset \Re^d$,
  and $CR(\calT_h)$ denote the Crouzeix Raviart space.
  If $u_h \in CR(\calT_h)$, then $\bfp_h = \nabla u_h$ (the broken gradient)
  if and only if $\bfp_h \in \calP_0(\calT_h)^d$ and
  $$
  (\bfp_h, \bfq_h) = \langle u_h, \bfq_h.\bfn \rangle, \qquad \bfq_h \in \calP_0(\calT_h)^d.
  $$
\end{lemma}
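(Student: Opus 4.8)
The plan is to treat the statement as an elementary consequence of the divergence theorem applied elementwise, combined with the non--degeneracy of the $L^2$ pairing on the finite--dimensional space $\calP_0(\calT_h)^d$. Both implications reduce to the single identity obtained by integrating $\nabla u_h . \bfq_h$ by parts on each simplex, so I would first establish that identity and then read off both directions.

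For the forward implication, suppose $\bfp_h = \nabla u_h$ is the broken gradient. Since $u_h \in CR(\calT_h) \subset \calP_1(\calT_h)$ is affine on each $K$, its broken gradient is constant on each $K$, giving $\bfp_h \in \calP_0(\calT_h)^d$ immediately. For the integral identity, I would fix $\bfq_h \in \calP_0(\calT_h)^d$ and work one simplex at a time. On $K$ the field $\bfq_h$ is a constant vector, so $\rmdiv(\bfq_h) = 0$ on $K$ and hence $\rmdiv(u_h \bfq_h) = \nabla u_h . \bfq_h$ there. The divergence theorem on $K$ then yields $\int_K \nabla u_h . \bfq_h = \int_{\partial K} u_h \, \bfq_h . \bfn$, where the trace of $u_h$ is taken from the interior of $K$ as specified in the notation. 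Summing over $K \in \calT_h$ produces $\int_\Omega \bfp_h . \bfq_h = \sum_{K} \int_{\partial K} u_h \, \bfq_h . \bfn$, which is the asserted identity.

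For the converse, suppose $\bfp_h \in \calP_0(\calT_h)^d$ satisfies the identity. By the forward direction the broken gradient $\nabla u_h$ also lies in $\calP_0(\calT_h)^d$ and satisfies the same identity. Setting $\bfr_h = \bfp_h - \nabla u_h \in \calP_0(\calT_h)^d$ and subtracting gives $\int_\Omega \bfr_h . \bfq_h = 0$ for every $\bfq_h \in \calP_0(\calT_h)^d$. Choosing $\bfq_h = \bfr_h$ forces $\int_\Omega |\bfr_h|^2 = 0$, so $\bfr_h = 0$ and $\bfp_h = \nabla u_h$. Thus the right--hand side of the identity characterizes the broken gradient as the unique piecewise--constant field pairing correctly against all test fields in $\calP_0(\calT_h)^d$.

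There is no genuine obstacle here; the only points requiring care are bookkeeping ones. First, one must consistently use interior traces on $\partial K$ when invoking the divergence theorem, so that the face contributions from the two simplices sharing an interior face carry opposite normals and the interior traces are the ones appearing in the sum. Second, it is worth emphasizing that the $CR$ constraint $\int_k [u_h] = 0$ is not actually needed for this particular characterization, which holds for any $u_h \in \calP_1(\calT_h)$; the statement is phrased for $CR(\calT_h)$ because that is the space arising from the lifting $\calL_h$ and the HDG trace variable, and this weak description of $\nabla u_h$ against $\calP_0(\calT_h)^d$ is precisely the form needed in the subsequent analysis.
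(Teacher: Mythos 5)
Your proof is correct. Note that the paper itself gives no proof of this lemma at all --- it is stated as an elementary characterization, and only the later lemmas (\ref{lem:CRproperties}, \ref{lem:Ghproperties}, \ref{lem:HDGproperties}) are proved in the appendix --- so your argument (elementwise divergence theorem with a constant test field, then non--degeneracy of the $L^2$ pairing on the finite--dimensional space $\calP_0(\calT_h)^d$ for the converse) is exactly the intended one, and your observation that the Crouzeix--Raviart jump condition plays no role in this particular statement is also accurate.
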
\begin{proof}
  Let $u_h \in CR(\calT_h)$.
    
  $(\Rightarrow)$ If $\bfp_h = \nabla u_h$, then
  $\bfp_h \in \calP_0(\calT_h)^d$ and if $\bfq_h \in \calP_0(\calT_h)$,
  $$
  (\bfp_h, \bfq_h)
  = \sum_{K \in \calT_h} \int_K \bfp_h.\bfq_h
  = \sum_{K \in \calT_h} \int_K \nabla u_h.\bfq_h
  = \sum_{K \in \calT_h} \int_{\partial K} u_h \, \bfq_h.\bfn
  = \langle u_h \, \bfq_h.\bfn \rangle
  $$
  
  $(\Leftarrow)$ Let $\bfp_h \in \calP_0(\calT_h)$ satisfy the weak
  statement, and set $\bfq_h = \nabla \psi$ where
  $\psi \in \calP_1(\calT_h)$. Then
  $$
  (\bfp_h, \nabla\psi)
    = \sum_{K \in \calT_h} \int_K \bfp_h.\nabla\psi
    = \sum_{K \in \calT_h} \int_{\partial K} u_h \, (\nabla\psi.\bfn)
    = \sum_{K \in \calT_h} \int_K (\nabla u_h, \nabla\psi) 
    = (\nabla u_h, \nabla\psi)
  $$
  It follows that $\bfp_h = \nabla u_h$ since $\nabla: \calP_1(\calT_h)
  \rightarrow \calP_0(\calT_h)$ is surjective.
\end{proof}

Functions in $CR(\calT_h)$ inherit many of the usual properties of
functions in $\Hone$. Proofs of the following properties are given in
the Appendix.

\begin{lemma} \label{lem:CRproperties}
  Let $\Omega \subset \Re^d$ be a connected bounded Lipschitz domain.
  \begin{itemize}
  \item (Embedding) Let $\calT_h$ be a triangulation of $\Omega$ and
    $CR(\calT_h)$ denote the Crouzeix Raviart space. Then there exists
    a constant depending only upon the maximal aspect ratio of
    $\calT_h$ such that for all $u_h \in CR(\calT_h)$
    $$
    \lp{u_h} \leq C \honeh{u_h},
    \qquad 1 \leq p \leq \frac{2d}{d-2}
    \quad \text{ ($p < \infty$ if $d=2$)},
    $$
    and
    $$
    \norm{u_h}_{L^p(\partial\Omega)} \leq C \honeh{u_h},
    \qquad 1 \leq p \leq \frac{2(d-1)}{d-2}.
    $$    
  \item (Poincar\'e) Let $\calT_h$ be a triangulation of $\Omega$, and
    $CR(\calT_h)$ denote the Crouzeix Raviart space. If
    $\partial \Omega = \Gamma_0 \cup \Gamma_1$ is a partition of the
    boundary with $|\Gamma_0| > 0$ and each component is triangulated
    by $\delTh$, then there exists a constant depending only upon the
    maximal aspect ratio of $\calT_h$ and $\Gamma_0$ such that
    $$
    \ltwo{u_h} \leq C \ltwoh{\nabla u_h},
    \qquad u_h \in CR(\calT_h).
    $$

  \item (Compactness) Let $\{\calT_h\}_{h>0}$ be a regular family of
    triangulations of $\Omega$, and $\{CR(\calT_h)\}_{h>0}$ be the
    corresponding Crouzeix Raviart spaces. If $u_h \in CR(\calT_h)$
    and $\{\honeh{u_h} \}_{h > 0}$ is bounded, then there exists a
    subsequence (not relabeled) and $u \in \Hone$ such that
    $$
    u_h \rightarrow u \,\, \text{ in } \Ltwo, \qquad
    u_h|_{\partial \Omega} \rightarrow u|_{\partial \Omega}
    \,\, \text{ in } L^2(\partial\Omega),  \qquad
    \nabla u_h \rightharpoonup \nabla u \,\, \text{ in } \Ltwo^d.
    $$
  \end{itemize}
\end{lemma}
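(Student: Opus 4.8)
The plan is to reduce every assertion to the corresponding property of genuine $\Hone$ functions by means of an \emph{enriching} (nodal averaging) operator $E_h : CR(\calT_h) \rightarrow \calP_1(\calT_h) \cap \Hone$, obtained by assigning to each vertex $z$ the average of the elementwise values $u_h|_K(z)$ over the simplices $K$ containing $z$. The two estimates that drive everything are the stability bound $\honeh{E_h u_h} \leq C \honeh{u_h}$ and the approximation bounds
\[
 \ltwoh{u_h - E_h u_h} \leq C\, h\, \ltwoh{\nabla u_h},
 \qquad
 \ltwoh{\nabla(u_h - E_h u_h)} \leq C\, \ltwoh{\nabla u_h}
\]
with $C$ depending only on the aspect ratio. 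Both are local: on the patch $\omega_K$ (the union of simplices meeting $K$) the nodal discrepancy $u_h|_K(z) - (E_h u_h)(z)$ is a sum of face jumps $[u_h]$, and the defining Crouieux--Raviart condition $\int_k [u_h] = 0$ forces each such jump to have vanishing average on its face; a Poincar\'e inequality on the face then bounds it by $h_k$ times a tangential derivative, hence by the broken gradient, and a scaling argument on a reference patch produces the stated powers of $h$. First I would establish these estimates, since they are the workhorse for all three parts.

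For the embedding I would split $\lp{u_h} \leq \lp{E_h u_h} + \lp{u_h - E_h u_h}$. The conforming Sobolev inequality controls the first term, $\lp{E_h u_h} \leq C \honeh{E_h u_h} \leq C \honeh{u_h}$, over the full range $p \leq 2d/(d-2)$. For the second term an inverse inequality on each $K$ gives $\|u_h - E_h u_h\|_{L^p(K)} \leq C h_K^{d/p - d/2}\|u_h - E_h u_h\|_{L^2(K)} \leq C h_K^{1 + d/p - d/2}\norm{\nabla u_h}_{L^2(\omega_K)}$; at the critical exponent $p = 2d/(d-2)$ the power of $h_K$ is exactly zero, and since $p \geq 2$ the elementwise bounds sum through the nesting $\ell^p \embed \ell^2$ and the finite overlap of the patches to give $\lp{u_h - E_h u_h} \leq C\, \ltwoh{\nabla u_h}$; smaller exponents follow since $\Omega$ is bounded. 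The trace embedding is identical in structure, using the conforming trace theorem on $E_h u_h$ over the sharp range $p \leq 2(d-1)/(d-2)$, a trace inverse inequality, and the boundary bound $\|u_h - E_h u_h\|_{L^2(e)} \leq C h^{1/2}\norm{\nabla u_h}_{L^2(\omega_e)}$ deduced from the two volume estimates via the multiplicative trace inequality, the critical power again cancelling. The Poincar\'e inequality is the same reduction: the first term of $\ltwo{u_h} \leq \ltwo{u_h - E_h u_h} + \ltwo{E_h u_h}$ is $O(h)\,\ltwoh{\nabla u_h}$, while the second is controlled by the conforming Poincar\'e--Friedrichs inequality applied to $E_h u_h$, which is where $|\Gamma_0| > 0$ enters, since $E_h u_h$ carries the homogeneous condition on $\Gamma_0$ and the Friedrichs constant depends on $\Gamma_0$.

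For compactness I would first use stability to see that $\{E_h u_h\}$ is bounded in $\Hone$; Rellich--Kondrachov then furnishes a subsequence with $E_h u_h \rightarrow u$ in $\Ltwo$, $E_h u_h|_{\partial\Omega} \rightarrow u|_{\partial\Omega}$ in $L^2(\partial\Omega)$, and $E_h u_h \weak u$ in $\Hone$ for some $u \in \Hone$. The approximation bounds transfer these to $u_h$: $\ltwo{u_h - E_h u_h} \leq C h \rightarrow 0$ gives $u_h \rightarrow u$ in $\Ltwo$, and the boundary bound gives $u_h|_{\partial\Omega} \rightarrow u|_{\partial\Omega}$ in $L^2(\partial\Omega)$. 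It remains to identify the weak limit of the broken gradients: after passing to a further subsequence $\nabla u_h \weak \bfg$ in $\Ltwo^d$, I would test against $\bfphi \in C^\infty(\overline\Omega)^d$ and integrate by parts elementwise, $(\nabla u_h, \bfphi) = -(u_h, \rmdiv \bfphi) + \langle u_h, \bfphi.\bfn \rangle$; on each interior face the flux $\bfphi.\bfn$ may be replaced by its face average, whose contribution vanishes \emph{exactly} because $\int_k [u_h] = 0$, leaving a remainder controlled by $h\,\ltwoh{\nabla u_h}$. Letting $h \to 0$ and using $u_h \rightarrow u$ in $\Ltwo$ and on $\partial\Omega$ identifies $\bfg$ as the distributional gradient of $u$ with the correct trace, so $\bfg = \nabla u$ and the limit lies in $\Hone$.

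I expect the main obstacle to be the enriching estimates of the first paragraph, specifically extracting the full $O(h)$ gain from the mean--zero jump condition $\int_k[u_h]=0$ rather than the $O(h^{1/2})$ a generic discontinuous jump would yield; this gain is precisely what recovers the full conforming Sobolev and trace ranges. The accompanying delicacy is the borderline bookkeeping in the embedding, which closes only because the power of $h$ is exactly zero at the critical exponents and the nesting $\ell^p \embed \ell^2$ is available for $p \geq 2$. The identification of the weak gradient limit is the conceptual crux of compactness, but it is routine once the vanishing of the interior jump averages is used.
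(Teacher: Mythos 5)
Your overall strategy---reduce every assertion to the conforming setting through an enriching operator with stability and $O(h)$ approximation properties, then identify the weak gradient limit using the mean--zero jump condition---is exactly the strategy of the paper, which invokes Brenner's operators rather than re--deriving them. Two of your three parts are sound. Your embedding argument (split $u_h = E_h u_h + (u_h - E_h u_h)$, apply inverse estimates to the polynomial remainder, observe that the power of $h$ cancels exactly at $p = 2d/(d-2)$, resp.\ $p = 2(d-1)/(d-2)$, and sum using $(\sum_K a_K^p)^{1/p} \leq (\sum_K a_K^2)^{1/2}$ for $p \geq 2$) is correct, though the paper closes this step differently: it uses a right inverse $F:\calP_d(\calT_h)\cap\Hone \rightarrow CR(\calT_h)$ of the enriching operator $E$ satisfying $F(E(u_h)) = u_h$ together with $L^p$-- and $L^p(\partial\Omega)$--stability of $F$, so that $\lp{u_h} = \lp{F(E(u_h))} \leq C\lp{E(u_h)} \leq C\hone{E(u_h)} \leq C\honeh{u_h}$, which avoids the borderline bookkeeping at the critical exponent entirely. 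Your compactness argument (replace $\bfphi.\bfn$ by its face average, so that the averaged term vanishes exactly by $\int_k[u_h]=0$ and the remainder is $O(h)$) is a legitimate variant of the paper's device of testing against the lowest--order Raviart--Thomas interpolant, whose normal traces are facewise constant for precisely the same purpose; both are correct.

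The genuine flaw is in the Poincar\'e step. You assert that ``$E_h u_h$ carries the homogeneous condition on $\Gamma_0$.'' For the operator you defined---vertex values obtained by averaging the element values $u_h|_K(z)$ over the simplices containing $z$---this is false: a Crouzeix--Raviart function satisfying the natural boundary condition (vanishing \emph{face averages} on $\Gamma_0$) does not have vanishing vertex values on $\Gamma_0$, and averaging element values at a boundary vertex produces a nonzero nodal value there, so $E_h u_h|_{\Gamma_0} \neq 0$ in general and the conforming Friedrichs inequality cannot be applied as you invoke it. The fix is either (i) modify $E_h$ so that all nodal values on $\Gamma_0$ are set to zero---the approximation bound $\ltwoh{u_h - E_h u_h} \leq C h \ltwoh{\nabla u_h}$ survives because the mean--zero condition on the boundary faces controls the discarded values, which is how Brenner treats boundary conditions---or (ii) keep your $E_h$ but use the generalized Friedrichs inequality $\ltwo{w} \leq C\left(\ltwo{\nabla w} + \norm{w}_{L^2(\Gamma_0)}\right)$ together with $\norm{E_h u_h}_{L^2(\Gamma_0)} \leq \norm{E_h u_h - u_h}_{L^2(\Gamma_0)} + \norm{u_h}_{L^2(\Gamma_0)} \leq C\sqrt{h}\,\ltwoh{\nabla u_h}$, where the last inequality uses your boundary approximation estimate and a face Poincar\'e inequality applied to the mean--zero trace of $u_h$ on each face of $\Gamma_0$. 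Note also that \emph{some} boundary hypothesis on $u_h$ must be stated for the Poincar\'e inequality to hold at all (constants violate it as written); the paper's proof makes clear the intended hypothesis is vanishing face averages on $\Gamma_0$, and your argument should state this explicitly since it is what drives either fix.
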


The liftings of functions in $\Hone$ satisfy the usual approximation
properties.

\begin{lemma} \label{lem:CRdensity}
Let $\calT_h$ be a triangulation of a bounded Lipschitz domain
$\Omega \subset \Re^d$, and $CR(\calT_h)$ denote the Crouzeix Raviart
space.  Define an embedding
$\iota_h:\Hone \embed \calP_0(\calT_h) \times \calP_0(\delTh)$ by
$\iota_h(v) = (v_h, \vhat_h)$ with
$$
v_h|_K = \frac{1}{|K|} \int_{K} v,
\quad K \in \calT_h,
\qquad \text{ and } \qquad
\vhat_h|_k = \frac{1}{|k|} \int_k v, \quad k \in \delTh.
$$
Then there exists a constant $C > 0$ depending only upon the aspect
ratio of $\calT_h$ such that for all $v \in \Hone$ 
and $(v_h,\vhat_h) = \iota_h(v)$,
\begin{itemize}
  \item $\lranglenorm{v_h - \vhat_h} \leq C h \ltwo{\nabla v}^2.$

  \item $\ltwo{\calL_h(\vhat_h) - v} + \ltwo{v_h - v}
    \leq C h |v|_{H^1(\Omega)}.$
    
  \item In addition, if $v \in H^2(\Omega)$ then
  $$
  \ltwo{\calL_h(\vhat_h) - v} + h \ltwoh{\nabla (\calL_h(\vhat_h) - v)}
  \leq C h^2 |v|_{H^2(\Omega)}.
  $$

\item $\ltwoh{\nabla \calL_h(\vhat_h)} \leq C \ltwo{\nabla v}$.

  In particular, for a regular family of meshes
  $\displaystyle \lim_{h \rightarrow 0} \ltwoh{\nabla \calL_h(\vhat_h)
    - \nabla v} = 0$
\end{itemize}
\end{lemma}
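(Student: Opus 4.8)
The plan is to observe first that the entire lemma collapses onto classical $CR$ interpolation estimates once one notices that $w_h := \calL_h(\vhat_h)$ coincides on each simplex $K$ with the local $CR$ interpolant $I_{CR}v$. Indeed, by Definition \ref{def:Lh} the restriction $w_h|_K$ is the unique affine function whose $d+1$ face averages equal $\fint_k v$, $k\subset\partial K$, which is precisely the defining property of $I_{CR}v$; moreover $\calL_h(\vhat)=\calL_h(\vhat_h)=w_h$ since the lifting sees only face averages. Consequently every $L^2$ and broken--$H^1$ error below is an interpolation error for $I_{CR}$, and I would estimate each one element by element on a fixed reference simplex and then rescale, so that all constants depend on the aspect ratio alone.

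For the first bullet I would write $(v_h-\vhat_h)$ on each face $k\subset\partial K$ as the constant $\fint_K v-\fint_k v$. The linear functional $v\mapsto \fint_K v-\fint_k v$ annihilates constants, so on the unit reference simplex $\hat K$ the Bramble--Hilbert lemma bounds it by $C|\hat v|_{H^1(\hat K)}$. Since averages are scale invariant while $|\hat v|_{H^1(\hat K)}=h^{1-d/2}|v|_{H^1(K)}$ and each face has measure $\sim h^{d-1}$, pairing these gives $\int_{\partial K}(v_h-\vhat_h)^2\le C h\,|v|_{H^1(K)}^2$, and summing over $K$ and taking a square root yields the claimed $\sqrt h$ bound. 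The second and third bullets are the standard zeroth-- and first--order estimates: the element average obeys the Poincar\'e bound $\|v-v_h\|_{L^2(K)}\le Ch|v|_{H^1(K)}$, and because $I_{CR}$ reproduces $\calP_1$, Bramble--Hilbert on $\hat K$ gives $\|v-I_{CR}v\|_{L^2(K)}\le Ch|v|_{H^1(K)}$ and, when $v\in H^2(\Omega)$, the sharper $\|v-I_{CR}v\|_{L^2(K)}\le Ch^2|v|_{H^2(K)}$ together with $|v-I_{CR}v|_{H^1(K)}\le Ch|v|_{H^2(K)}$. Summing over $K$ produces the two displayed estimates.

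The fourth bullet is where the structure of $CR(\calT_h)$ genuinely enters, and I regard it as the crux. The key is the identity $\nabla w_h=\Pi_0\nabla v$, where $\Pi_0$ is the $L^2$ projection onto $\calP_0(\calT_h)^d$. To obtain it, apply Lemma \ref{lem:crGrad} on $K$: for any constant $\bfq_h$,
$$
\int_K \nabla w_h\cdot\bfq_h = \int_{\partial K} w_h\,\bfq_h.\bfn = \sum_{k\subset\partial K}\bfq_h.\bfn\int_k w_h = \sum_{k\subset\partial K}\bfq_h.\bfn\int_k v = \int_{\partial K} v\,\bfq_h.\bfn = \int_K \nabla v\cdot\bfq_h,
$$
using that $w_h$ matches the face integrals of $v$ (so $\int_k w_h=\int_k v$) and the divergence theorem for the constant field $\bfq_h$. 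Hence $\nabla w_h|_K=\fint_K\nabla v$, i.e. $\nabla w_h=\Pi_0\nabla v$. Because $\Pi_0$ is an $L^2$ contraction this gives $\ltwoh{\nabla\calL_h(\vhat)}\le\ltwo{\nabla v}$ immediately, and the convergence $\ltwoh{\nabla\calL_h(\vhat)-\nabla v}=\|\Pi_0\nabla v-\nabla v\|_{L^2}\to 0$ follows from the density of piecewise constants in $\Ltwo^d$ for a regular family (prove it for smooth fields and pass to the limit via the uniform bound $\|\Pi_0\|\le 1$).

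The routine work is the scaling and Bramble--Hilbert applications; the only real bookkeeping subtlety is tracking the powers of $h$ in the first bullet, where the $\sqrt h$ is the clean product of the $h^{d-1}$ face measure against the $h^{1-d/2}$ scaling of the $H^1$ seminorm, and checking that every constant is pulled back to a single reference simplex so that it depends on the aspect ratio only. I expect the gradient identity of the last bullet to be the step requiring the most care, since it is what ties $\calL_h$ to the $L^2$ projection and makes both the stability bound and the convergence fall out at once.
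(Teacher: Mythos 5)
Your proof is correct, and it takes exactly the route the paper intends: the paper omits this proof entirely, remarking only that the properties ``follow from standard parent element calculations and scaling,'' which is precisely what you carry out --- identification of $\calL_h(\vhat_h)$ with the classical Crouzeix--Raviart interpolant, Bramble--Hilbert estimates on a reference simplex with scaling for the first three bullets, and the identity $\nabla \calL_h(\vhat_h) = \Pi_0 \nabla v$ (gradient equals the $L^2$ projection of $\nabla v$ onto $\calP_0(\calT_h)^d$) for the stability and convergence claims of the last bullet. Nothing further is needed.
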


These properties follow from standard parent element calculations
and scaling so are omitted. 

\section{Piecewise Constant HDG Scheme} \label{sec:hdg0}
The HDG scheme \eqnref{:wp} with piecewise constant
functions become
$(u_h, \uhat_h, \bfp_h) \in \calP_0(\calT_h) \times \calP_0(\delTh)
\times \calP_0(\calT_h)^d$
\begin{subequations}
  \begin{align}
  \langle u_h-\uhat_h, v_h-\vhat_h \rangle
  &+ \langle \bfp_h.\bfn, \vhat_h \rangle
  = (f,v_h) + \langle g, \vhat \rangle_{\Gamma_1}
  \qquad&& (v_h, \vhat_h) \in U_h, \\
  \left(A^{-1} \bfp_h, \bfq_h \vph\right)
  &- \langle \bfq_h.\bfn, \uhat_h \rangle = 0,
  \qquad&& \bfq_h \in \calP_0(\calT_h)^d, \\
  \label{eqn:piecewise HDG boundary}
  \langle \uhat_h, \vhat_h \rangle_{\Gamma_0}
  &= \langle u_0, \vhat_h \rangle_{\Gamma_0},
  \qquad&& \vhat_h \in \calP_0(\delTh \cap \Gamma_0),
\end{align}
\end{subequations}
where $U_h = \{(v_h, \vhat_h) \in \calP_0(\calT_h) \times \calP_0(\delTh)
\sst \vhat_h|_{\Gamma_0} = 0 \}$.  Lemma \ref{lem:crGrad} shows that the
first two equations may be written as
\begin{subequations} \label{eqn:hdg0}
\begin{gather}
  \langle u_h-\uhat_h, v_h-\vhat_h \rangle
  + \left(\bfp_h, \nabla \calL_h(\vhat_h) \vph\right)
  = (f,v_h) + \langle g, \vhat \rangle_{\Gamma_1} \label{eqn:hdg0u} \\
  \left(A^{-1} \bfp_h, \bfq_h \vph\right)
  - \left(\nabla \calL_h(\uhat_h), \bfq_h \vph \right) = 0,
  \label{eqn:hdg0p}
\end{gather}
\end{subequations}
where $\calL_h: L^1(\delTh) \rightarrow CR(\calT_h)$ is the lifting
defined in Definition \ref{def:Lh}. It follows that
$\bfp_h = A_h \nabla \calL_h(\uhat_h)$, where, for each
$K \in \calT_h$, the matrix $A_h|_K$ is the inverse of the average
value of $A^{-1}$ on $K$. Eliminating $\bfp_h$ from the first equation
gives
$$
\langle u_h-\uhat_h, v_h-\vhat_h \rangle
+ \left(A_h \nabla \calL_h(\uhat_h), \nabla \calL_h(\vhat_h) \vph\right)
= (f,v_h) + \langle g, \vhat \rangle_{\Gamma_1}.
$$
The classical Lax Milgram theorem can now be used to establish
stability.  Define the norm $\norm{.}_{U_h}$ on $U_h$ by
\begin{equation}  \label{eqn:UhoNorm} 
  \norm{(u_h,\uhat_h)}^2_{U_h}
  = \lranglenorm{u_h - \uhat_h} + \ltwoh{\nabla \calL_h(\uhat_h)}^2.
\end{equation}
This is a norm on $U_h$, and semi--norm on $H^1(\calT_h) \times L^2(\delTh)$.

When $A$ is elliptic, it is immediate that the left hand side of the
weak statement is bilinear, continuous, and coercive on
$(U_h, \norm{.}_{U_h})$. The following lemma will show that the right hand
side is continuous and that the Dirichlet boundary data is achieved
in the limit as $h \rightarrow 0$.

\begin{lemma} \label{lem:consistent0}
  Let $\calT_h$ be a triangulation of a bounded Lipschitz domain
  $\Omega \subset \Re^d$ and $\Gamma_0 \subset \partial \Omega$ be
  triangulated by $\calT_h$.
  \begin{itemize}
  \item If $\vhat_h \in P_0(\delTh)$ then there exists a constant
    depending only upon the aspect ratio of $\calT_h$ such that
    $$
    \norm{\vhat_h}_{L^2(\partial\Omega)}
    \leq C  \norm{\calL_h(\vhat_h)}_{H^1(\calT_h)},
    \quad \text{ and } \quad 
    \norm{\calL_h(\vhat_h) - \vhat_h}_{L^2(\partial\Omega)}
    \leq C \ltwoh{\nabla \calL_h(\vhat_h)} \, \sqrt{h}.
    $$

  \item Set
    $
    U_h = \{(v_h, \vhat_h) \in \calP_0(\calT_h) \times \calP_0(\delTh)
    \sst \vhat_h|_{\Gamma_0} = 0 \}
    $
    and assume that $|\Gamma_0| > 0$. Then there exists a constant
    $C > 0$ depending only upon the aspect ratio of $\calT_h$ such that
    $$
    \ltwo{v_h} 
    \leq C \norm{(v_h, \vhat_h)}_{U_h},
    \quad \text{ and } \quad
    \ltwo{v_h - \calL_h(\vhat_h)} 
    \leq C \norm{(v_h, \vhat_h)}_{U_h} \sqrt{h},
    $$
    for all $(v_h, \vhat_h) \in U_h$.
  \end{itemize}
\end{lemma}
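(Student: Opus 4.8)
The plan is to reduce both parts to element-local estimates and to extract the $\sqrt{h}$ factors by scaling to a reference simplex. The key structural fact I would exploit is that the lifting $\calL_h$ is local on $CR(\calT_h)$: since a Crouieux Raviart function on a simplex $K$ is determined by its $d+1$ face averages, the restriction $\calL_h(\vhat_h)|_K$ depends only on the values of $\vhat_h$ on the faces of $K$. Writing $w_h = \calL_h(\vhat_h) \in CR(\calT_h)$, every quantity in the lemma can therefore be estimated simplex by simplex and summed, with all constants tracked through the affine map from the reference element, hence depending only on the aspect ratio.

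For Part 1 I would prove the second inequality first, as it carries the technical content. On a boundary face $k \subset \partial\Omega$ of an element $K$, the trace $w_h|_k$ is affine with mean value $\vhat_h|_k$, so $w_h - \vhat_h$ is the mean-zero part of an affine function on $k$. A Poincare inequality on the $(d-1)$-simplex $k$ bounds $\norm{w_h - \vhat_h}_{L^2(k)}$ by its tangential gradient, and scaling (using $|k|/|K| \le C/h$ from shape regularity, together with $\nabla w_h$ being constant on $K$) yields $\norm{w_h - \vhat_h}^2_{L^2(k)} \le C h\, \norm{\nabla w_h}_{L^2(K)}^2$. Summing over boundary faces gives the claimed estimate. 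The first inequality then follows from the triangle inequality $\norm{\vhat_h}_{L^2(\partial\Omega)} \le \norm{w_h}_{L^2(\partial\Omega)} + \norm{w_h - \vhat_h}_{L^2(\partial\Omega)}$, bounding the first term by the Crouieux Raviart boundary embedding of Lemma \ref{lem:CRproperties} at $p = 2$ (admissible for $d = 2, 3$) and the second by the estimate just proved together with $h \le 1$.

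For Part 2 I would again begin with the second (super-approximation) inequality. On each $K$ decompose $v_h - w_h = (v_h - \wbar^K_h) - (w_h - \wbar^K_h)$, where $\wbar^K_h$ is the cell mean of $w_h$. Since $w_h$ is affine, $w_h - \wbar^K_h$ has zero mean on $K$ and a cell Poincare inequality bounds its $L^2(K)$ norm by $C h\, \norm{\nabla w_h}_{L^2(K)}$. The remaining term uses the identity that the cell mean of an affine function equals the average of its $d+1$ face means, so $v_h - \wbar^K_h = \frac{1}{d+1}\sum_i (v_h - \vhat_h)|_{k_i}$ is a constant expressed through the face jumps; Cauchy--Schwarz and the elementary bound $|K| \le (h/d)\,|k_i|$ convert $|K|(v_h - \wbar^K_h)^2$ into $C h\, \langle v_h - \vhat_h\rangle^2|_K$. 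Summing gives $\ltwo{v_h - w_h}^2 \le C h\, \norm{(v_h,\vhat_h)}_{U_h}^2$. The first inequality then follows from $\ltwo{v_h} \le \ltwo{v_h - w_h} + \ltwo{w_h}$: the first term is controlled by the estimate just proved, and the second by the Crouieux Raviart Poincare inequality of Lemma \ref{lem:CRproperties}, which applies because $\vhat_h|_{\Gamma_0} = 0$ forces the face means of $w_h$ to vanish on $\Gamma_0$.

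The main obstacle, and the only place requiring care, is the bookkeeping of scaling exponents, in particular the passage from the face-weighted seminorm $\langle v_h - \vhat_h\rangle^2$ (whose weights scale like $|k| \sim h^{d-1}$) to volume-weighted cell quantities (scaling like $|K| \sim h^d$): it is precisely the algebraic identity that the cell average of a Crouieux Raviart function is the mean of its face averages that produces the clean gain of one power of $h$, and hence all the $\sqrt{h}$ factors. A minor point to flag is the interpretation of the boundary embedding exponent $2(d-1)/(d-2)$ when $d = 2$, which is read as any finite $p$ so that $p = 2$ remains admissible.
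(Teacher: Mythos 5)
Your proof is correct and follows essentially the same strategy as the paper's: element-local parent-element/scaling estimates for the deviation of $\calL_h(\vhat_h)$ from its face data, an intermediate elementwise constant built from the face averages of $\vhat_h$ (plus Cauchy--Schwarz and the $|K|/|k|\le C h$ scaling) to convert the jump seminorm $\langle v_h-\vhat_h\rangle$ into volume quantities, and the Crouieux Raviart embedding and Poincare inequalities of Lemma \ref{lem:CRproperties} for the global bounds. The only cosmetic differences are that the paper's intermediate constant is the boundary average $\frac{1}{|\partial K|}\int_{\partial K}\vhat_h$ where yours is the cell mean (via the identity that an affine function's cell mean equals the arithmetic mean of its face means, which is valid), and that the paper obtains $\norm{\vhat_h}_{L^2(\partial\Omega)}\le \norm{\calL_h(\vhat_h)}_{L^2(\partial\Omega)}$ directly from the fact that $\vhat_h|_k$ is the $L^2(k)$-projection of the trace onto constants, rather than by your triangle inequality (which additionally needs $h$ bounded, harmless here).
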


\begin{proof}
  \begin{itemize}
  \item If $k = K \cap \partial \Omega$ with $K \in \calT_h$, then
    $\vhat_h|_k = \Pi_0(\calL_h(\vhat_h)|_k)$ where
    $\Pi_0:L^2(k) \rightarrow \calP_0(k)$ is the projection onto the
    constant functions, so
    $$
    \norm{\vhat_h}_{L^2(\partial\Omega)}
    \leq \norm{\calL_h(\vhat_h)}_{L^2(\partial\Omega)}
    \leq \norm{\calL_h(\vhat_h)}_{H^1(\calT_h)}.
    $$
    In addition, the mapping from $u \in H^1(K)$ to
    $u|_k - \Pi_0(u|_k) \in L^2(k)$ vanishes when $u$ is constant, so a
    parent element calculation shows
    $$
    \norm{\calL(\vhat_h) - \vhat_h}_{L^2(k)}^2
    \leq C \norm{\nabla \calL(\vhat_h)}_{L^2(K)}^2 \, h_K,
    \, \text{ whence } \,
    \norm{\calL_h(\vhat_h) - \vhat_h}_{L^2(\partial\Omega)} \leq C
    \ltwoh{\nabla \calL_h(\vhat_h)} \, \sqrt{h}.
    $$

  \item Let $(v_h, \vhat_h) \in U_h$ and assume that $|\Gamma_0| > 0$. 

    For $u_h \in \Honeh$ let $u_\partial \in \calP_0(\calT_h)$ be the
    function taking the average over the boundary of each element,
    $$
    u_\partial|_K = \frac{1}{|\partial K|} \int_{\partial K} u_h,
    \qquad \text{ in particular, } \qquad
    \calL_h(\vhat_h)_\partial
    = \frac{1}{|\partial K|} \int_{\partial K} \vhat_h.    
    $$
    Since $u_h \mapsto u_h - u_\partial$ vanishes when $u_h \in
    \calP_0(\calT_h)$, a parent element calculation shows
    $$
    \norm{u_h - u_\partial}^2_{L^2(K)}
    \leq C \norm{\nabla u_h}^2_{L^2(K)} h^2_K,
    \qquad \text{ so } \qquad
    \ltwo{u_h - u_\partial} \leq C \ltwoh{\nabla u_h} h.
    $$
    Next,
    \begin{eqnarray*}
      \ltwo{v_h - \calL_h(\vhat_h)_\partial}^2
      &=& \sum_{K \in \calT_h} |K| (v_h - \calL_h(\vhat_h)_\partial)^2 \\
      &=& \sum_{K \in \calT_h} |K| (v_h - \frac{1}{|\partial K|}
          \int_{\partial K} \vhat_h)^2 \\
      &=& \sum_{K \in \calT_h} |K| \left(\frac{1}{|\partial K|}
          \int_{\partial K} (v_h - \vhat_h) \right)^2 \\
      &\leq& \sum_{K \in \calT_h} 
             \frac{|K|}{|\partial K|} \int_{\partial K} (v_h - \vhat_h)^2 
             \leq C h \lranglenorm{v_h - \vhat_h}.
    \end{eqnarray*}
    The bound on $v_h - \calL_h(\vhat_h)$ now follows from the
    triangle inequality,
    $$
    \ltwo{v_h - \calL_h(\vhat_h)}
    \leq \ltwo{v_h - \calL_h(\vhat_h)_\partial}
    + \ltwo{\calL_h(\vhat_h)_\partial - \calL_h(\vhat_h)}
    \leq C \norm{(v_h, \vhat_h)}_{U_h} \, \sqrt{h}.
    $$
    The Poincar\'e inequality of Lemma \ref{lem:CRproperties}
    can now be used to bound $\ltwo{v_h}$,
    $$
    \ltwo{v_h}
    \leq \ltwo{v_h - \calL_h(\vhat_h)}
      + \ltwo{\calL_h(\vhat_h)}
    \leq C \norm{(\vhat_h,v_h)}_{U_h} .
    $$
  \end{itemize}
\end{proof}

\begin{theorem}
  Let $\Omega \subset \Re^d$ be a bounded Lipschitz domain with boundary
  partition $\partial \Omega = \Gamma_0 \cup \Gamma_1$ with
  $|\Gamma_0| > 0$, and suppose that $\{\calT_h\}_{h>0}$ is a regular
  family of triangulations with each boundary partition triangulated by
  $\{\delTh\}_{h>0}$. Assume that $A \in \Linf^{d \times d}$ is
  elliptic, $f \in \Ltwo$, $u_0 \in \Hone$, and $g \in L^2(\Gamma_1)$,
  and let $u \in \Hone$ denote the solution of the elliptic equation
  \eqnref{:pde}.

  Then for each $h > 0$ there exists a unique solution
  $(u_h, \uhat_h, \bfp_h) \in \calP_0(\calT_h) \times \calP_0(\delTh)
  \times \calP_0(\calT_h)^d$ of the piecewise constant HDG
  \eqnref{:hdg0} scheme, and
  $$
  \lim_{h \rightarrow 0} \Big(
  \lranglenorm{u_h - \uhat_h}^{1/2}
  + \ltwo{u-u_h} + \ltwo{\nabla u - A^{-1} \bfp_h}
  \vph\Big) = 0.
  $$
\end{theorem}

\begin{proof}
  To accommodate the Dirichlet boundary data, let
  $(u_{0h}, \uhat_{0h}) = \iota_h(u_0)$ be the embedding of $u_0$
  defined in Lemma \ref{lem:CRdensity}. Since
  $\norm{(u_{0h}, \uhat_{0h})}_{U_h} \leq C \ltwo{\nabla u_0}$,
  the Lax Milgram Lemma shows that there exists a
  unique solution to the problem
  $(u_h, \uhat_h) - (u_{0h}, \uhat_{0h}) \in U_h$,
  $$
  \langle u_h-\uhat_h, v_h-\vhat_h \rangle
  + \left(A_h \nabla \calL_h(\uhat_h), \nabla \calL_h(\vhat_h) \vph\right)
  = (f,v_h) + \langle g, \vhat \rangle_{\Gamma_1},
  \qquad (v_h, \vhat_h) \in U_h,
  $$
  with
  $$
  \norm{(u_h, \uhat_h)}_{U_h}
  \leq C \left(\ltwo{f} + \norm{g}_{L^2(\Gamma_1)} + \hone{u_0} \vph\right).
  $$
  This problem is equivalent to the piecewise constant HDG scheme
  \eqnref{:hdg0} with $\bfp_h = A_h \nabla \calL_h(\uhat_h)$. The compactness
  properties of the Crouzeix Raviart space show that there exists
  $\ut \in \Hone$ and a subsequence such that
  $$
  \calL_h(\uhat_h) \rightarrow \ut \,\, \text{ in } \Ltwo, \quad
  \nabla \calL(\uhat)_h \weak \nabla \ut \,\, \text{ in } \Ltwo^d, \quad
  \calL_h(\uhat_h)|_{\partial \Omega} \rightarrow \ut|_{\partial \Omega}
  \,\, \text{ in } L^2(\partial\Omega);
  $$
  in addition, it follows from equation \eqnref{:hdg0p} 
  and lemma \ref{lem:consistent0} that
  $$
  \bfp_h \weak A \nabla \ut \equiv \bfp, 
  \qquad \text{ and } \qquad
  u_h \rightarrow \ut \,\, \text{ in } L^2(\Omega).
  $$
  Also, note that $\ut|_{\Gamma_0} = u_0|_{\Gamma_0}$ since
  $$
    \norm{\ut - u_0}_{L^2(\Gamma_0)}
    \leq \lim_{h\to 0} \Big(
     \norm{\ut - \calL_h(\uhat_h)}_{L^2(\Gamma_0)}
    +\norm{\calL_h(\uhat_h) - \uhat_h}_{L^2(\Gamma_0)}
    +\norm{\uhat_h - u_0}_{L^2(\Gamma_0)} \Big)
    = 0.
  $$
  This limit holds since
  Lemma \ref{lem:consistent0} shows the middle term vanishes, and
  the third term vanishes since the Dirichlet data is projected
  onto the boundary faces in equation \eqnref{:wpUo}. Next, fix
  $v \in U \equiv \{v \in \Hone \sst v|_{\Gamma_0} = 0\}$ and set
  $(v_h, \vhat_h) = \iota_h(v)$ in the weak statement \eqnref{:hdg0u}.
  Passage to the limit shows $\ut - u_0 \in U_h$ and
  $$
  (A \nabla \ut, \nabla v) = (f,v) + \langle g,v\rangle, \qquad v \in U,
  $$
  
  since $\lranglenorm{u_h - \uhat_h}$ is bounded and
  $\lranglenorm{v_h - \vhat_h} \rightarrow 0$.  Since solutions of
  the elliptic problem are unique, it follows that $\ut = u$, and it
  was unnecessary to pass to a subsequence in the passage to the limit.

  To establish strong convergence of the gradients, set
  $(v_h, \vhat_h, \bfq_h) = (u_h - u_{0h}, \uhat_h - \uhat_{0h},
  \bfp_h)$ in \eqnref{:hdg0} and take the limit to get
  $$
  \lim_{h\rightarrow 0} \Big(
    \lranglenorm{u_h - \uhat_h}
    + (A^{-1} \bfp_h, \bfp_h) - (\bfp, \nabla u_0) \Big)
  = (f,u-u_0) + \langle g,u-u_0 \rangle_{\Gamma_1}
  = (A^{-1} \bfp - \nabla u_0, \bfp).
  $$
  The second equality on the right follows upon setting the test
  function $v = u-u_0 \in U$ in the weak statement for $u$.  Since the map
  $\bfp \mapsto (A^{-1}\bfp, \bfp)^{1/2}$ is strictly convex,
  and convex functions are weakly lower
  semi--continuous, it follows that
  $ (A^{-1}\bfp, \bfp) \leq \liminf_{h \rightarrow 0} (A^{-1}\bfp_h,
  \bfp_h)$. Strong convergence of $\bfp_h$ to $\bfp$ in $\Ltwo^d$
  and convergence of $\lranglenorm{u_h - \uhat_h}$ to zero
  then follows. 
\end{proof}

\section{Stability and Convergence of the HDG Scheme} \label{sec:hdg}
In order to establish convergence of solutions of the HDG scheme
\eqnref{:wp}, a consistent approximation of the distributional
derivative is developed. This is introduced in the next section, and
essential properties required for the subsequent proof of stability
and convergence of the HDG scheme are presented.

\subsection{Discrete Distributional Gradient}
The discrete distributional gradient introduced next appears
intrinsically within the HDG scheme \eqnref{:wp}.

\begin{definition} \label{def:Gh} The lifting
  $\mbfG_h: \Honeh \times L^2(\delTh) \rightarrow \calPell(\calT_h)^d$ is the
  function characterized by
  $$
    (\mbfG_h(v,\vhat), \bfq_h)
    = (\nabla v , \bfq_h) 
    + \langle \vhat - v, \bfq_h \cdot\bfn  \rangle,
    \qquad \bfq_h \in \calPell(\calT_h)^d.
  $$
\end{definition}
This lifting is used to construct the natural analog of $\Hone$ in 
the HDG context.

\begin{definition} \label{def:Uh}
  Let $\calT_h$ be a triangulation of a bounded Lipschitz
  domain $\Omega \subset \Re^d$.
  \begin{itemize}
    \item Let $\Gamma_0 \subset \partial\Omega$ be the subset of the
    boundary where Dirichlet data is specified. Then $U_h$ is the
    subspace of $\calPell(\calT_h) \times \calPell(\delTh)$ with
    $\vhat_h$ vanishing on the $\Gamma_0$,
    $$
    U_h = \{(v_h, \vhat_h) \in \calPell(\calT_h) \times \calPell(\delTh),
    \sst \vhat_h|_{\Gamma_0} = 0\}.
    $$
  \item The norm $\norm{.}_{U_h}$ on $U_h$ is
    $$
    \norm{(u_h,\uhat_h)}^2_{U_h}
    = \lranglenorm{u_h - \uhat_h} + \ltwo{\bfG_h(u_h,\uhat_h)}^2.
    $$
    and is a semi--norm on $H^1(\calT_h) \times L^2(\delTh)$.

  \item The embedding
    $\iota_h: \Hone \rightarrow \calPell(\calT_h) \times
    \calPell(\delTh)$ is the function for which the components
    $(u_h, \uhat_h) = \iota_h(u)$ satisfy
    $$
    (u_h, v_h) = (u, v_h), \quad v_h \in \calPell(\calT_h),
    \quad \text{ and } \quad
    \langle \uhat_h, \vhat_h \rangle = \langle u, \vhat_h \rangle,
    \quad \vhat_h \in \calPell(\delTh).
    $$
    That is, $u_h$ is the $\Ltwo$ projection of $u$ onto
    $\calPell(\calT_h)$ and $\uhat_h$ is the $L^2(\delTh)$ projection
    of $u|_{\delTh}$ onto $\calPell(\delTh)$.
  \end{itemize}
  Note that $\mbfG_h(\iota_h(u))$ is the $\Ltwo$ projection of
  $\nabla u$ onto $\calPell(\calT_h)$.
\end{definition}

Consistency of the HDG scheme will follow from the convergence of the
embeddings $\iota_h(u)$.

\begin{lemma} \label{lem:GhDensity} Let $\calT_h$ be a triangulation
  of a domain $\Omega \subset \Re^d$ and
  $\iota_h: \Hone \rightarrow \calPell(\calT_h) \times \calPell(\delTh)$
  be the embedding characterized in Definition \ref{def:Uh}.  Then
  there exists a constant depending only upon the maximal aspect ratio
  of $\calT_h$ such that for all $u \in \Hone$ and
  $(u_h,\uhat_h) = \iota_h(u)$
   \begin{itemize}
    \item $\lranglenorm{u_h - \uhat_h} \leq C h \ltwo{\nabla u}^2$.
    \item $\ltwo{u_h - u}  \leq C h \ltwo{\nabla u}$.
    \item In addition, for $0 \leq s \leq \ell$, if $u \in H^{s+1}(\Omega)$ then
    $$
      \ltwo{u_h - u} + h \ltwo{\mbfG_h(u_h,\uhat_h) - \nabla u}
      \leq C h^{s+1} |u|_{H^{s+1}(\Omega)}.
      $$

    \item $\ltwo{\mbfG_h(u_h, \uhat_h)} \leq C \ltwo{\nabla u}$.

      In particular, for a regular family of meshes
      $\lim_{h \rightarrow 0}
      \ltwo{\mbfG_h(u_h, \uhat_h) - \nabla u} = 0$.
  \end{itemize}
\end{lemma}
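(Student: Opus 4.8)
The plan is to reduce every assertion about $\mbfG_h$ to a statement about the $\Ltwo$ projection, using the identity recorded after Definition \ref{def:Uh}: writing $\Pi_h$ for the $\Ltwo$ projection onto $\calP_k(\calT_h)$ (acting componentwise on vector fields), one has $\mbfG_h(u_h,\uhat_h)=\Pi_h(\nabla u)$ whenever $(u_h,\uhat_h)=\iota_h(u)$. I would establish this first. Testing Definition \ref{def:Gh} with $\bfq_h\in\calP_k(\calT_h)^d$ and integrating by parts elementwise gives $(\mbfG_h(u_h,\uhat_h),\bfq_h)=-(u_h,\rmdiv\bfq_h)+\langle\uhat_h,\bfq_h\cdot\bfn\rangle$. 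Since $\rmdiv\bfq_h\in\calP_{k-1}(\calT_h)\subset\calP_k(\calT_h)$ and $\bfq_h\cdot\bfn\in\calP_k(\delTh)$, the defining relations of $\iota_h$ let me replace $u_h$ by $u$ and $\uhat_h$ by $u$ in those two terms; integrating by parts back (now legitimate, as $u\in\Hone$) yields $(\mbfG_h(u_h,\uhat_h),\bfq_h)=(\nabla u,\bfq_h)$, which characterizes $\Pi_h(\nabla u)$.

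Granting this, the fourth bullet and the gradient half of the third are immediate from projection theory. Stability $\ltwo{\mbfG_h(u_h,\uhat_h)}=\ltwo{\Pi_h\nabla u}\le\ltwo{\nabla u}$ is the contraction property of an orthogonal projection. The best-approximation property gives $\ltwo{\mbfG_h(u_h,\uhat_h)-\nabla u}=\mathrm{dist}_{\Ltwo}(\nabla u,\calP_k(\calT_h)^d)$; a standard Bramble--Hilbert-plus-scaling estimate \cite{BrSc08,Ci78} bounds this by $Ch^{\ell}|\nabla u|_{H^{\ell}(\Omega)}\le Ch^{\ell}|u|_{H^{\ell+1}(\Omega)}$ for $0\le\ell\le k$, which after multiplying by $h$ is the claimed gradient estimate, while for a general regular family the same distance tends to $0$ by density of piecewise polynomials in $\Ltwo^d$, giving the concluding limit. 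The second bullet and the $\ltwo{u_h-u}$ half of the third are the scalar analogues: $u_h=\Pi_h u$, so $\ltwo{u-u_h}=\mathrm{dist}_{\Ltwo}(u,\calP_k(\calT_h))\le Ch^{\ell+1}|u|_{H^{\ell+1}(\Omega)}$, specializing to $\ltwo{u-u_h}\le Ch\,\ltwo{\nabla u}$ at $\ell=0$.

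The one genuinely separate estimate is the first bullet, $\langle u_h-\uhat_h\rangle\le C\sqrt{h}\,\ltwo{\nabla u}$, and this is where I expect the only real work. The key point is that on each $K$ the map $u\mapsto(u_h-\uhat_h)|_{\partial K}$, from $H^1(K)$ to $L^2(\partial K)$, is linear and \emph{annihilates constants}, because the volume projection defining $u_h$ and the face projections defining $\uhat_h$ both reproduce constants (so a constant maps to $0$ on $\partial K$). On the reference simplex $\Khat$ this map is bounded from $H^1(\Khat)$ into $L^2(\partial\Khat)$ (trace theorem together with the $\Ltwo$-contraction of the two projections) and still kills constants, so Bramble--Hilbert bounds the $L^2(\partial\Khat)$ norm of the image by $C$ times the $H^1$-seminorm of the pullback of $u$; here I use that affine pullback commutes with both projections. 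Matching the volume scaling $|v|_{H^1(K)}^2\sim h_K^{d-2}$ against the surface scaling $\norm{v}_{L^2(\partial K)}^2\sim h_K^{d-1}$ produces the single power $\sqrt{h_K}$, and summing $\int_{\partial K}(u_h-\uhat_h)^2\le Ch_K|u|_{H^1(K)}^2$ over $K\in\calT_h$ gives the result. The hard part is thus purely bookkeeping: checking that the reference-element operator is well defined and constant-annihilating, and tracking the volume versus surface scaling exponents, through which the aspect-ratio dependence of $C$ enters.
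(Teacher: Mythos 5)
Your proposal is correct and follows exactly the route the paper intends: the paper itself records the identity $\mbfG_h(\iota_h(u))=\Pi_h(\nabla u)$ at the end of Definition \ref{def:Uh} and then dismisses the lemma with ``standard parent element calculations and scaling,'' which is precisely your combination of projection theory, Bramble--Hilbert on the constant-annihilating map $u\mapsto(u_h-\uhat_h)|_{\partial K}$, and the $h_K^{d-2}$ versus $h_K^{d-1}$ scaling match. Your write-up simply supplies the details the paper omits, and they are all sound.
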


These properties follow from standard parent element calculations, the
Bramble--Hilbert Lemma, and scaling \cite{Ci78,BrSc08}. The next lemma
establishes Poincar\'e and trace inequalities for the space
$(U_h,\norm{.}_{U_h})$ and compactness of bounded sequences.

\begin{lemma} \label{lem:HDGproperties} Let $\calT_h$ be a
  triangulation of a bounded Lipschitz domain $\Omega \subset \Re^d$,
  $\mbfG_h: \Honeh \times L^2(\delTh) \rightarrow \calPell(\calT_h)^d$
  be the lifting of Definition \ref{def:Gh}, and $\norm{.}_{U_h}$ and
  $U_h$ be the norm and subspace characterized in Definition
  \ref{def:Uh}.
  \begin{itemize}
  \item (Embedding) There exists a constant depending only upon the
    maximal aspect ratio of $\calT_h$ such that for all
    $(u_h,\uhat_h) \in \calPell(\calT_h) \times \calPell(\delTh)$
    $$
    \lp{u_h} \leq C \left( \ltwo{u_h} + \norm{(u_h, \uhat_h)}_{U_h} \right)
    \qquad 1 \leq p \leq \frac{2d}{{d-1}},
    $$
    and
    $$
    \norm{\uhat_h}_{L^2(\partial\Omega)}
    \leq C \left( \ltwo{u_h} + \norm{(u_h, \uhat_h)}_{U_h} \right).
    $$    
  \item (Poincar\'e) Let $\partial \Omega = \Gamma_0 \cup \Gamma_1$ be a
    partition of the boundary with $|\Gamma_0| > 0$, and suppose that
    each component is triangulated by $\delTh$. Then there exists a
    constant depending only upon the maximal aspect ratio of $\calT_h$
    such that
    $$
    \ltwo{u_h} \leq C \norm{(u_h,\uhat_h)}_{U_h},
    \qquad u_h \in U_h.
    $$

  \item (Compactness) Let $\{\calT_h\}_{h>0}$ be a regular family of
    triangulations of $\Omega$.  If
    $(u_h,\uhat_h) \in \calPell(\calT_h) \times \calP(\delTh)$ and
    $\{\ltwo{u_h} + \norm{(u_h,\uhat_h)}_{U_h}\}_{h > 0}$ is bounded, then
    there exists a subsequence (not relabeled) and $u \in \Hone$ such
    that as $h \rightarrow 0$
    $$
    u_h \rightarrow u \,\, \text{ in } \Ltwo, \quad
    \mbfG_h(u_h,\uhat_h) \rightharpoonup \nabla u \,\,
    \text{ in } \Ltwo^d, \quad
    \uhat_h|_{\partial \Omega} \rightharpoonup u|_{\partial \Omega}
    \,\, \text{ in } L^2(\partial\Omega).
    $$
  \end{itemize}
\end{lemma}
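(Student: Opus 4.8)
The plan is to compare the HDG pair $(u_h,\uhat_h)$ with the single Crouzeix--Raviart function $w_h = \calL_h(\uhat_h) \in CR(\calT_h)$ obtained by lifting the face averages of $\uhat_h$, and then to transfer the embedding, Poincar\'e, and compactness statements of Lemma~\ref{lem:CRproperties} from $w_h$ to $u_h$. The first and organizing observation is that the broken gradient of $w_h$ is exactly the $\Ltwo$ projection of $\mbfG_h(u_h,\uhat_h)$ onto piecewise constants. Indeed, testing Definition~\ref{def:Gh} with $\bfq_h \in \calP_0(\calT_h)^d$, integration by parts on each element turns $(\nabla u_h,\bfq_h)$ into $\langle u_h,\bfq_h.\bfn\rangle$ (the volume term vanishing because $\bfq_h$ is constant), which cancels the $-\langle u_h,\bfq_h.\bfn\rangle$ in the definition of $\mbfG_h$; since $w_h$ and $\uhat_h$ share the same average on every face, one is left with $(\mbfG_h(u_h,\uhat_h),\bfq_h) = \langle\uhat_h,\bfq_h.\bfn\rangle = \langle w_h,\bfq_h.\bfn\rangle = (\nabla w_h,\bfq_h)$, the last equality being Lemma~\ref{lem:crGrad}. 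Hence $\nabla w_h = \Pi_0\mbfG_h(u_h,\uhat_h)$ and in particular $\ltwoh{\nabla w_h} \le \ltwoh{\mbfG_h(u_h,\uhat_h)} \le \norm{(u_h,\uhat_h)}_{U_h}$, so $w_h$ inherits all the coercivity of the $U_h$ norm.

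The technical heart of the argument is the comparison estimate $\ltwo{u_h - w_h} \le C\sqrt{h}\,\norm{(u_h,\uhat_h)}_{U_h}$. I would prove it elementwise by writing $u_h - w_h = (u_h - P_K u_h) + (P_K u_h - P_K w_h) + (P_K w_h - w_h)$, where $P_K$ is the average over $K$. The two outer terms are bounded by $Ch_K$ times the element gradients of $u_h$ and $w_h$ via Poincar\'e--Wirtinger, while the difference of means is routed through the boundary averages: because $u_h$ and $w_h$ agree with $\uhat_h$ in face average up to the jump $u_h-\uhat_h$, the reference-element bound on $P_K-P_{\partial K}$ gives $|P_K u_h - P_K w_h| \le C(h_K^{1-d/2}\norm{\nabla u_h}_{L^2(K)} + h_K^{(1-d)/2}\norm{u_h-\uhat_h}_{L^2(\partial K)} + h_K^{1-d/2}\norm{\nabla w_h}_{L^2(K)})$. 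The main obstacle — and the reason the estimate only carries a factor $\sqrt h$ rather than controlling $u_h-w_h$ outright — is that the broken gradient $\nabla u_h$ is \emph{not} itself controlled by $\norm{(u_h,\uhat_h)}_{U_h}$; only the combination $\mbfG_h = \nabla u_h + R_h$ is, where $R_h$ is the $\calP_k$ lifting of the jump defined by $(R_h,\bfq_h) = \langle\uhat_h-u_h,\bfq_h.\bfn\rangle$. The way around this is the scaled bound $\sum_{K} h_K^2\norm{\nabla u_h}^2_{L^2(K)} \le Ch\,\norm{(u_h,\uhat_h)}^2_{U_h}$: writing $\nabla u_h = \mbfG_h - R_h$ and using the scaling $\norm{R_h}_{L^2(K)} \le Ch_K^{-1/2}\norm{u_h-\uhat_h}_{L^2(\partial K)}$, the two contributions reduce to $h^2\ltwoh{\mbfG_h}^2$ and $h\langle u_h-\uhat_h\rangle^2$, both $\le Ch\norm{(u_h,\uhat_h)}^2_{U_h}$. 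The same manipulation also yields $\sum_K h_K\norm{\nabla u_h}^2_{L^2(K)} \le C\norm{(u_h,\uhat_h)}^2_{U_h}$, and these are precisely what is needed to sum the elementwise bounds into the claimed $\ltwo{u_h-w_h}$ estimate.

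With these two facts the three assertions follow. For the \emph{embedding} I split $\lp{u_h} \le \lp{u_h-w_h} + \lp{w_h}$: the second term is bounded by $C\honeh{w_h} \le C(\ltwo{u_h}+\norm{(u_h,\uhat_h)}_{U_h})$ from the CR embedding (whose range $p\le 2d/(d-2)$ contains $2d/(d-1)$) together with $\ltwo{w_h}\le\ltwo{u_h}+\ltwo{u_h-w_h}$; the first term is treated at the endpoint $p=2d/(d-1)$ through the inverse inequality $\norm{u_h-w_h}_{L^p(K)}\le Ch_K^{-1/2}\norm{u_h-w_h}_{L^2(K)}$, which after the elementwise bounds and $\ell^p\le\ell^2$ (valid since $p\ge2$) again reduces to $\langle u_h-\uhat_h\rangle$, $\sum_K h_K\norm{\nabla u_h}^2_{L^2(K)}$ and $\sum_K h_K\norm{\nabla w_h}^2_{L^2(K)}$. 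The trace bound comes from $\norm{\uhat_h}_{L^2(\partial\Omega)} \le \norm{\uhat_h - w_h}_{L^2(\partial\Omega)} + \norm{w_h}_{L^2(\partial\Omega)}$, estimating $\norm{w_h}_{L^2(\partial\Omega)}$ by the CR trace embedding and $\norm{\uhat_h-w_h}_{L^2(\partial\Omega)} \le \langle u_h-\uhat_h\rangle + \norm{u_h-w_h}_{L^2(\partial\Omega)}$, the last boundary term controlled by a scaled trace inequality and the elementwise estimates. For \emph{Poincar\'e}, $(u_h,\uhat_h)\in U_h$ forces $w_h$ to have vanishing face averages on $\Gamma_0$, so the CR Poincar\'e gives $\ltwo{w_h}\le C\ltwoh{\nabla w_h}\le C\norm{(u_h,\uhat_h)}_{U_h}$ and hence $\ltwo{u_h}\le\ltwo{u_h-w_h}+\ltwo{w_h}\le C\norm{(u_h,\uhat_h)}_{U_h}$. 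Finally, for \emph{compactness}, boundedness of $\ltwo{u_h}+\norm{(u_h,\uhat_h)}_{U_h}$ makes $\{\honeh{w_h}\}$ bounded, so CR compactness yields a subsequence and $u\in\Hone$ with $w_h\to u$ in $\Ltwo$, $w_h|_{\partial\Omega}\to u|_{\partial\Omega}$ in $L^2(\partial\Omega)$, and $\nabla w_h\weak\nabla u$; the comparison estimate upgrades the first to $u_h\to u$ in $\Ltwo$, and pairing $\mbfG_h$ with a smooth $\bfphi$, inserting $\Pi_0$ and using $\nabla w_h=\Pi_0\mbfG_h$, gives $\mbfG_h(u_h,\uhat_h)\weak\nabla u$. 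For the boundary limit I would use that $\uhat_h-w_h$ has zero average on each boundary face, so against smooth $\eta$ the pairing equals $\langle\uhat_h-w_h,\eta-\Pi_0^\partial\eta\rangle_{\partial\Omega}=O(h)\norm{(u_h,\uhat_h)}_{U_h}\to0$, whence $\uhat_h|_{\partial\Omega}\weak u|_{\partial\Omega}$.
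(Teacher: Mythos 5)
Your proposal is correct and follows essentially the same route as the paper: your three preparatory facts --- the projection identity $\nabla \calL_h(\uhat_h) = \Pi_0\, \mbfG_h(u_h,\uhat_h)$, the scaled control of the broken gradient $\nabla u_h$ through the jump lifting, and the comparison estimate $\ltwo{u_h - \calL_h(\uhat_h)} \le C\sqrt{h}\,\norm{(u_h,\uhat_h)}_{U_h}$ --- are precisely parts (1)--(3) of the paper's Lemma \ref{lem:Ghproperties}, and your transfer of the Crouzeix--Raviart embedding, Poincar\'e, and compactness properties to the HDG pair proceeds as in the paper's appendix proof. The only (valid) deviation is in identifying the weak limits in the compactness step: you use the projection identity together with the vanishing face averages of $\uhat_h - \calL_h(\uhat_h)$ and the strong convergences supplied by Lemma \ref{lem:CRproperties}, whereas the paper identifies both limits at once by integrating by parts against continuous piecewise-polynomial approximations of a smooth test field; both arguments are sound.
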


The proof of this lemma is given in the appendix.

\subsection{Convergence}
With $\mbfG_h: \Honeh \times L^2(\delTh) \rightarrow \calPell(\calT_h)^d$
denoting the lifting introduced in Definition \ref{def:Gh}, the HDG
scheme \eqnref{:wp} may be written as
$(u_h, \uhat_h, \bfp_h) \in \calPell(\calT_h) \times \calPell(\delTh)
\times \calPell(\calT_h)^d$
\begin{subequations}
\label{eqn:hdgG}
\begin{align}
  \label{eqn:hdg_pk_with_G_1}
  \langle u_h-\uhat_h, v_h-\vhat_h \rangle
  + \left(\bfp_h, \mbfG_h(v_h,\vhat_h) \vph\right)
  &= (f,v_h) + \langle g, \vhat \rangle_{\Gamma_1},
  \quad && (v_h, \vhat_h) \in U_h, \\
  \label{eqn:hdg_pk_with_G_2}
  \left(A^{-1} \bfp_h, \bfq_h \vph\right)
  - \left(\mbfG_h(u_h,\uhat_h), \bfq_h\vph\right)
  &= 0,
  \qquad&&  \bfq_h \in \calPell(\calT_h)^d, \\
  \label{eqn:hdg_pk_with_G_3}
  \langle \uhat_h, \vhat_h \rangle_{\Gamma_0}
  = \langle u_0, \vhat_h \rangle_{\Gamma_0}, &
  \qquad&& \vhat_h \in \calPell(\delTh \cap \Gamma_0).
\end{align}
\end{subequations}
where $U_h$ is the subspace of
$\calPell(\calT_h) \times \calPell(\delTh)$ introduced in Definition
\ref{def:Uh}.  Since $A$ is elliptic, the mapping
$A_h: \calPell(\calT_h) \rightarrow \calPell(\calT_h)$ characterized by
$$
\left(A^{-1} A_h(\bfg_h), \bfq_h\right)
= \left(\bfg_h, \bfq_h\right), 
\qquad \bfq_h\in \calPell(\calT_h)^d.
$$
is an isomorphism with continuity constants in $\Ltwo$ independent
of $h$.  Equation \eqnref{:hdg_pk_with_G_2} shows
$\bfp_h = A_h(\mbfG_h(u_h,\uhat_h))$, so equation
\eqnref{:hdg_pk_with_G_1} becomes
\begin{equation} \label{eqn:wpGh}
  \langle u_h-\uhat_h, v_h-\vhat_h \rangle
  + \left(A_h(\mbfG_h(u_h,\uhat_h)), \mbfG_h(v_h,\vhat_h) \vph\right)
  = (f,v_h) + \langle g, \vhat_h \rangle_{\Gamma_1},
\end{equation}
and the classical Lax Milgram Theorem can be used to establish
existence and stability of the HDG scheme.

\begin{theorem}
  Let $\Omega \subset \Re^d$ be a bounded Lipschitz domain with boundary
  partition $\partial \Omega = \Gamma_0 \cup \Gamma_1$ with
  $|\Gamma_0| > 0$, and suppose that $\{\calT_h\}_{h>0}$ is a regular
  family of triangulations with each boundary partition triangulated by
  $\{\delTh\}_{h>0}$. Assume that $A \in \Linf^{d \times d}$ is
  elliptic, $f \in \Ltwo$, $u_0 \in \Hone$, and $g \in L^2(\Gamma_1)$,
  and let $u \in \Hone$ denote the solution of the elliptic equation
  \eqnref{:pde}.

  Then for each $h > 0$ there exists a unique solution
  $(u_h, \uhat_h, \bfp_h) \in \calPell(\calT_h) \times \calPell(\delTh)
  \times \calPell(\calT_h)^d$ of the HDG scheme \eqnref{:wp}, and
  $$
  \lim_{h \rightarrow 0} \Big(
  \lranglenorm{u_h - \uhat_h}^{1/2}
  + \ltwo{u-u_h} + \ltwoh{\nabla u - A^{-1} \bfp_h}
  \vph\Big) = 0.
  $$
\end{theorem}

\begin{proof}
  To accommodate the Dirichlet boundary data, let
  $(u_{0h}, \uhat_{0h}) = \iota_h(u_0)$ be the embedding of $u_0$
  defined in Definition \ref{def:Uh}. Lemma \ref{lem:GhDensity} shows
  $\norm{(u_{0h}, \uhat_{0h})}_{U_h} \leq C \ltwo{\nabla u_0}$, so the
  Lax Milgram Lemma gives the existence of a unique solution to the
  problem $(u_h, \uhat_h) - (u_{0h}, \uhat_{0h}) \in U_h$,
  $$
  \langle u_h-\uhat_h, v_h-\vhat_h \rangle
  + \left(
    A_h(\mbfG_h(u_h,\uhat_h)), \mbfG_h(v_h,\vhat_h) \vph\right)
  = (f,v_h) + \langle g, \vhat \rangle_{\Gamma_1},
  \quad (v_h, \vhat_h) \in U_h,
  $$
  with
  $$
  \norm{(u_h, \uhat_h)}_{U_h}
  \leq C \left(\ltwo{f} + \norm{g}_{L^2(\Gamma_1)} + \hone{u_0} \vph\right).
  $$
  This problem is equivalent to the HDG scheme \eqnref{:wp} with
  $\bfp_h = A_h(\mbfG_h(u_h,\uhat_h))$. When $A$ is elliptic,
  the compactness properties
  of Lemma \ref{lem:HDGproperties} show that there exists
  $\ut \in \Hone$ and $\bfp \in \Ltwo^d$ and a subsequence such that
  \begin{align*}
    \mbfG_h(u_h, \uhat_h) &\weak \nabla \ut &&\,\, \text{ in } \Ltwo^d, \qquad
    && \bfp_h \weak \bfp &\,\, \text{ in } \Ltwo^d, \\\
    u_h &\rightarrow \ut &&\,\, \text{ in } \Ltwo, \qquad
    && \uhat_h|_{\partial \Omega} \weak \ut|_{\partial \Omega}
    &\,\, \text{ in } L^2(\partial\Omega);
  \end{align*}
  in addition, it follows from equation \eqnref{:hdg_pk_with_G_2} that
  $A^{-1} \bfp_h \weak \nabla \ut$, so $\bfp = A \nabla \ut$.

  To establish consistency, first note that
  $\ut|_{\Gamma_0} = u_0|_{\Gamma_0}$ since equation
  \eqnref{:hdg_pk_with_G_3} shows $\uhat_h|_{\Gamma_0}$ is the
  projection of $u_0$ onto $\calPell(\Gamma_0)$. Next, fix
  $v \in U \equiv \{v \in \Hone \sst v|_{\Gamma_0} = 0\}$ and set
  $(v_h, \vhat_h) = \iota_h(v)$ in the weak statement
  \eqnref{:hdg_pk_with_G_1}. Passage to the limit shows
  $\ut - u_0 \in U_h$ and
  $$
  (A \nabla \ut, \nabla v) = (f,v) + (g,v), \qquad v \in U,
  $$
  since $\lranglenorm{u_h - \uhat_h}$ is bounded and
  $\lranglenorm{v_h - \vhat_h} \rightarrow 0$.  Since solutions of
  the elliptic problem are unique, it follows that $\ut = u$, and it
  was unnecessary to pass to a subsequence in the passage to the limit.

  To establish strong convergence of the gradients, set
  $(v_h, \vhat_h, \bfq_h) = (u_h - u_{0h}, \uhat_h - \uhat_{0h},
  \bfp_h)$ in \eqnref{:hdg_pk_with_G_1} and take the limit to get
  $$
  \lim_{h\rightarrow 0} \Big(
    \lranglenorm{u_h - \uhat_h}
    + (A^{-1} \bfp_h, \bfp_h) - (\bfp, \nabla u_0) \Big)
  = (f,u-u_0) + \langle g,u-u_0 \rangle_{\Gamma_1}
  = (A^{-1} \bfp - \nabla u_0, \bfp).
  $$
  The second inequality on the right follows upon setting the test
  function $v = u-u_0 \in U$ in the weak statement for $u$.  Since the map
  $\bfp \mapsto (A^{-1}\bfp, \bfp)^{1/2}$ is strictly convex,
  and convex functions are weakly lower
  semi--continuous, it follows that
  $ (A^{-1}\bfp, \bfp) \leq \liminf_{h \rightarrow 0} (A^{-1}\bfp_h,
  \bfp_h)$. Strong convergence of $\bfp_h$ to $\bfp$ in $\Ltwo^d$
  and convergence of $\lranglenorm{u_h - \uhat_h} $ to zero
  then follows. 
\end{proof}

\section{Variational Problems} \label{sec:hdv}
This section establishes the stability and convergence of HDG 
formulations of variational problems which seek minima of
functions $I:\Hone \rightarrow \Re$ of the form
\begin{equation} \label{eqn:vp}
I(u) = \int_\Omega f(x, u(x), \nabla u(x)) \, dx
+ \int_{\Gamma_1} g(s, u(s)) \, da(s)
\qquad u - u_0 \in U,
\end{equation}
where $U = \{u \in \Hone \sst u|_{\Gamma_0} = 0\}$. Here
$\Omega \subset \Re^d$ is a bounded domain and
$\partial \Omega = \Gamma_0 \cup \Gamma_1$ a partition of
the boundary. Adopting the notation of Section \ref{sec:hdg}, the
HDG formulation of the variational problem seeks minima
of $I_h : \calPell(\calT_h) \times \calPell(\delTh) \rightarrow \Re$,
\begin{equation} \label{eqn:vph}
I_h(u_h,\uhat_h) = \half \lranglenorm{u_h - \uhat_h}
+ \int_\Omega f(., u_h, \mbfG_h(u_h,\uhat_h))
+ \int_{\Gamma_1} g \, \uhat_h,
\end{equation}
with $(u_h, \uhat_h) - \iota_h(u_0) \in U_h$; that is,
$$
(u_h, \uhat_h)
\in \Big\{(u_h, \uhat_h) \in \calPell(\calT_h) \times \calPell(\delTh)
\sst \int_k \uhat_h \vhat_h = \int_k u_0 \vhat_h, \,\,
\vhat_h \in \calPell(k), \,\, k \subset \Gamma_0 \Big\}.
$$

\begin{example} If $A \in \Linf^{d \times d}$ is symmetric and
  $f(x,u,\bfp) = (1/2) |\bfp|^2_A - f u$, the Euler Lagrange equation
  of \eqnref{:vph} is \eqnref{:wpGh} with $A_h = A$.
\end{example}

The following conditions on the integrands will ensure measurability
of the integrands.

\begin{definition}[Caratheodory Function] Let $\Omega \subset \Re^d$
  be a domain, then
  $f:\Omega \times \Re^m \times \Re^M \rightarrow \Re \cup \{\infty\}$
  is {\em Caratheodory} if
  \begin{itemize}
  \item $x \mapsto f(x,u,\xi)$ is measurable for every
    $(u,\xi) \in \Re^m \times \Re^M$.
    
  \item $(u,\xi) \mapsto f(x, u,\xi)$ is continuous for
    almost every $x \in \Omega$.
  \end{itemize}
\end{definition}

The following two theorems establish continuity and weak lower
semi--continuity of variational problems with Caratheodory integrands.

\begin{lemma}[Nemytskii Operators \cite{Sho97}]
  \label{lem:Nemytskii}
  Let $\Omega \subset \Re^d$ be open and $f:\Omega \times
  \Re^m \times \Re^M \rightarrow \Re$ be Caratheodory. Let
  $1 \leq p,q,r < \infty$, $k \in \Lr$, $C \geq 0$, and assume that
  $$
  |f(x,u,\xi)| \leq C \left(|u|^{p/r} + |\xi|^{q/r} \vph\right) + k(x).
  $$
  Then the operator $F:\Lp \times \Lq \rightarrow \Lr$ defined by
  $$
  F(u,\xi)(x) = f(x, u(x), \xi(x)), \qquad a.e \,\, x \in \Omega,
  $$
  is bounded and strongly continuous.
\end{lemma}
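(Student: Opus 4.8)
The plan is to establish the two assertions --- boundedness and strong continuity --- separately, with the growth bound acting as the common engine in both. Throughout I would first record the elementary convexity inequality $(a+b+c)^r \le 3^{r-1}(a^r+b^r+c^r)$, valid for $a,b,c\ge 0$ and $r\ge 1$ since $t\mapsto t^r$ is convex; this is exactly what converts the hypothesized pointwise growth of $f$ into an $L^r$ estimate.

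For boundedness I would first note that $F(u,\xi)$ is measurable: because $f$ is Caratheodory, the composition $x\mapsto f(x,u(x),\xi(x))$ of the measurable map $x\mapsto(u(x),\xi(x))$ with $f$ is measurable (approximate $(u,\xi)$ by simple functions and use continuity of $f$ in its last arguments). Raising the growth bound to the $r$-th power and applying the elementary inequality gives, pointwise,
$$
|f(x,u(x),\xi(x))|^r \le 3^{r-1}\left(C^r|u(x)|^p + C^r|\xi(x)|^q + k(x)^r \vph\right),
$$
and integrating over $\Omega$ yields $\lr{F(u,\xi)}^r \le 3^{r-1}(C^r\lp{u}^p + C^r\lq{\xi}^q + \lr{k}^r)$. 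This shows $F$ maps $\Lp\times\Lq$ into $\Lr$ and carries bounded sets to bounded sets.

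The substantive part is strong continuity, where the key tool is the subsequence principle combined with dominated convergence. Suppose $u_n\to u$ in $\Lp$ and $\xi_n\to\xi$ in $\Lq$; to conclude $F(u_n,\xi_n)\to F(u,\xi)$ in $\Lr$ it suffices to show that every subsequence admits a further subsequence along which this convergence holds. Along any subsequence, the $\Lp$ and $\Lq$ convergence lets me extract (via the standard lemma underlying completeness of $L^p$) a further subsequence, not relabeled, with $u_n\to u$ and $\xi_n\to\xi$ almost everywhere and with dominating functions $g\in\Lp$, $h\in\Lq$ such that $|u_n|\le g$ and $|\xi_n|\le h$ a.e. Continuity of $f$ in its last two arguments then gives $f(x,u_n(x),\xi_n(x))\to f(x,u(x),\xi(x))$ for almost every $x$, while the growth bound applied with $g,h$ in place of $u_n,\xi_n$ produces an $n$-independent $L^1$ majorant for $|F(u_n,\xi_n)-F(u,\xi)|^r$ --- a constant multiple of $C^r g^p + C^r h^q + k^r$, together with the fixed function $|F(u,\xi)|^r\in L^1$. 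The dominated convergence theorem then forces $\lr{F(u_n,\xi_n)-F(u,\xi)}\to 0$.

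The main obstacle is precisely this continuity step: norm convergence in $\Lp$ does not by itself supply a.e.\ convergence nor a single dominating function for the whole sequence, so dominated convergence cannot be applied directly. The resolution is the two-stage device above --- pass to a subsequence to gain both a.e.\ convergence and domination, then invoke the subsequence principle to transfer convergence back to the original sequence. The growth hypothesis is exactly what manufactures the $L^1$ majorant needed for the dominated convergence step; without it that step fails and $F$ need not be continuous.
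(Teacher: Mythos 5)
Your proof is correct. Note that the paper itself offers no proof of this lemma---it is quoted directly from \cite{Sho97}---and your argument (the convexity inequality and growth bound for boundedness; then, for continuity, extraction of an a.e.-convergent subsequence with an $L^p$/$L^q$ dominating function, dominated convergence on the $r$-th power, and the subsequence principle to recover convergence of the full sequence) is precisely the standard textbook proof of the Nemytskii continuity theorem.
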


\begin{theorem} [Weak Lower Semi--Continuity
  \mbox{\cite[Theorem 3.23]{DaBe08}}] \label{thm:lsc}
  Let $\Omega \subset \Re^d$ be open and
  $p, q \geq 1$. Let $f : \Omega \times \Re^m \times \Re^M \rightarrow
  \Re \times \{\infty\}$ be a Caratheodory function satisfying
  $$
  f (x, u, \xi) \geq (a (x). \xi) + b (x) + c |u|^p
  $$
  for almost every $x \in \Omega$, for every
  $(u, \xi) \in \Re^m \times \Re^M$, for some $a \in L^{q'}(\Omega)^M$,
  $1/q + 1/q' = 1$, $b \in \Lone$, and $c \in \Re$. Let
  $$
  I(u, \xi) = \int_\Omega f (x, u (x) , \xi (x)) dx.
  $$
  Assume that $\xi \rightarrow f (x, u, \xi)$ is convex and that
  $$
    \lim_{n \rightarrow \infty} u_n \rightarrow u \, \text{ in } \Lp^m,
  \qquad \text{ and } \qquad
  \lim_{n \rightarrow \infty} \xi_n \weak \xi \, \text{ in } \Lq^M.
  $$
  Then
  $
  \liminf_{n \rightarrow \infty} I(u_n , \xi_n ) \geq I(u, \xi) .
  $
\end{theorem}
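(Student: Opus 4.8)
The plan is to reduce to the case $f \geq 0$ and then convert the weak convergence of $\xi_n$ into a lower bound through affine minorants. \emph{Reduction.} Set $\tilde f(x,u,\xi) = f(x,u,\xi) - a(x)\cdot\xi - b(x) - c|u|^p$; this is again Caratheodory, still convex in $\xi$ (only an affine-in-$\xi$ term was removed), and now satisfies $\tilde f \geq 0$. Writing $I(u_n,\xi_n) = \int_\Omega \tilde f(x,u_n,\xi_n) + \int_\Omega a\cdot\xi_n + \int_\Omega b + c\int_\Omega |u_n|^p$, the last three terms converge to their values at $(u,\xi)$: the first since $a \in L^{q'}(\Omega)^M$ and $\xi_n \weak \xi$ in $\Lq^M$, the second is constant, and the third since $u_n \to u$ in $\Lp^m$ forces $\int_\Omega |u_n|^p \to \int_\Omega|u|^p$. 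Hence it suffices to prove the inequality for $\tilde f$, and I assume henceforth $f \geq 0$.

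\emph{Affine representation.} A nonnegative Caratheodory integrand convex in its last argument admits a representation as a countable supremum of affine-in-$\xi$ functions, $f(x,u,\xi) = \sup_{i\in\mathbb{N}}\big(\alpha_i(x,u)\cdot\xi + \beta_i(x,u)\big)$, with $\alpha_i,\beta_i$ Caratheodory; this is built from supporting hyperplanes at a countable dense set of points, a measurable-selection (Scorza--Dragoni) argument keeping the coefficients Caratheodory. Fix $\epsilon>0$ and a truncation level $T$. Since the supremum is attained up to $\epsilon$, the sets $\{x : \alpha_i(x,u(x))\cdot\xi(x)+\beta_i(x,u(x)) > \min(f(x,u(x),\xi(x)),T)-\epsilon\}$ cover $\Omega$; disjointifying and discarding the tail yields finitely many disjoint measurable sets $\Omega_1,\dots,\Omega_N$ with $|\Omega\setminus\bigcup_i\Omega_i|<\epsilon$, on each of which a single index realizes the bound.

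\emph{Passage to the limit.} On $\Omega_i$ the minorant inequality gives $f(x,u_n,\xi_n) \geq \alpha_i(x,u_n)\cdot\xi_n + \beta_i(x,u_n)$. Since $u_n \to u$ in $\Lp^m$, the Nemytskii continuity of Lemma \ref{lem:Nemytskii} (using the growth bounds carried by $\alpha_i,\beta_i$) gives $\alpha_i(\cdot,u_n) \to \alpha_i(\cdot,u)$ strongly in $L^{q'}(\Omega_i)^M$ and $\beta_i(\cdot,u_n) \to \beta_i(\cdot,u)$ strongly in $\Lone$; pairing the first against $\xi_n \weak \xi$ in $\Lq^M$ (strong times weak) and using $f\geq 0$ to drop the contribution of the tail, I obtain
\[
\liminf_{n\to\infty} I(u_n,\xi_n) \geq \sum_{i=1}^N \int_{\Omega_i}\big(\alpha_i(x,u)\cdot\xi+\beta_i(x,u)\big) \geq \int_{\bigcup_i\Omega_i}\big(\min(f(x,u,\xi),T)-\epsilon\big).
\]
Letting $N\to\infty$, then $\epsilon\to0$, then $T\to\infty$ (monotone convergence) gives $\liminf_n I(u_n,\xi_n)\geq \int_\Omega f(x,u,\xi) = I(u,\xi)$.

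\emph{Main obstacle.} The delicate point is the affine representation with coefficients of controlled growth: I need genuinely Caratheodory $\alpha_i,\beta_i$ satisfying growth bounds in $u$ (of the form $|\alpha_i(x,u)|\leq C|u|^{p/q'}+k_i(x)$ with $k_i\in L^{q'}$, and analogously for $\beta_i$) so that Lemma \ref{lem:Nemytskii} applies and the strong-times-weak convergence closes the argument. Producing such minorants --- controlling subgradient growth in $u$ and ensuring joint measurability via Scorza--Dragoni / measurable selection --- is the technical heart; convexity enters only here and in the single pointwise inequality $f(x,u_n,\xi_n)\geq \alpha_i(x,u_n)\cdot\xi_n+\beta_i(x,u_n)$.
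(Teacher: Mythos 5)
This theorem is not proved in the paper at all: it is imported verbatim from \cite{DaBe08} as a known tool, so the only meaningful comparison is with the standard proof in that reference. Your overall strategy --- reduce to $f \geq 0$ (that step is correct, including the convergence of the three subtracted terms), write the convex integrand as a countable supremum of affine-in-$\xi$ minorants, disjointify at the limit point, and close with a strong-times-weak pairing --- is a recognized classical route. The problem is the step you yourself flag as the ``technical heart,'' and it is not merely technical: as you have formulated it, it fails.

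You need minorant coefficients $\alpha_i,\beta_i$ that are Caratheodory \emph{and} satisfy polynomial growth bounds, $|\alpha_i(x,u)| \leq C|u|^{p/q'} + k_i(x)$ with $k_i \in L^{q'}(\Omega)$ (similarly for $\beta_i$), so that Lemma \ref{lem:Nemytskii} yields $\alpha_i(\cdot,u_n) \rightarrow \alpha_i(\cdot,u)$ strongly in $L^{q'}$. No such bounds can exist under the hypotheses of the theorem, because only a \emph{lower} bound on $f$ is assumed and no upper growth whatsoever. Concretely, take $f(x,u,\xi) = e^{|u|}\,|\xi|$: it is Caratheodory, nonnegative, and convex in $\xi$, so it satisfies every hypothesis; but any affine function $\alpha\cdot\xi + \beta \leq f(x,u_0,\cdot)$ must have $|\alpha| \leq e^{|u_0|}$ and $\beta \leq 0$, and if it comes within $\epsilon$ of the value at some $\xi_0 \neq 0$ it must have $|\alpha| \geq e^{|u_0|} - \epsilon/|\xi_0|$. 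Hence the coefficients of any $\epsilon$-achieving representation necessarily grow like $e^{|u|}$: they admit no bound of the form $C|u|^{p/q'}+k_i(x)$, the composition $\alpha_i(\cdot,u_n)$ need not lie in $L^{q'}$ at all (for $u_n \in \Lp$ unbounded, $e^{|u_n|}$ can fail to be in every Lebesgue space), and the strong-times-weak pairing collapses. The proof in \cite{DaBe08} avoids exactly this by localizing in $u$ as well as in $x$: pass to a subsequence with $u_n \rightarrow u$ a.e., then use Egorov (uniform convergence of $u_n$), Chebyshev/Lusin (boundedness of $u$ and $\xi$ off a set of small measure), and Scorza--Dragoni (continuity of $f$ on $K \times \Re^m \times \Re^M$ for a compact $K$ of nearly full measure). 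On the resulting good set all arguments $(x,u_n(x),\xi(x))$ range in one fixed compact set, so the relevant supporting-plane (or subgradient) coefficients are uniformly bounded and converge there, and bounded convergence replaces the Nemytskii lemma in the pairing; the localization is then removed using $f \geq 0$ and monotone convergence. If you substitute this localization for your growth-bound assumption, your outline does become a proof; as written, the quoted step is unprovable.
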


The following theorem establishes convergence minima of the discrete
variational problem. To simplify the exposition, the lower order terms
are assumed to have at most quadratic growth. The embedding results of
Lemma \ref{lem:CRproperties} and Lemma \ref{lem:HDGproperties}.  can
be used to generalize this.

\begin{theorem}
  Let $\Omega \subset \Re^d$ be a bounded Lipschitz domain with boundary
  partition $\partial \Omega = \Gamma_0 \cup \Gamma_1$ with
  $|\Gamma_0| > 0$, and suppose that $\{\calT_h\}_{h>0}$ is a regular
  family of triangulations with each boundary partition triangulated by
  $\{\delTh\}_{h>0}$. Assume that $f:\Omega \times \Re \times \Re^d
  \rightarrow \Re$ is Caratheodory, is convex in its
  third argument, and satisfies
  $$
  a_0(x) + a_1(x) |u| + a_2 |\bfg|^2
  \leq f(x,u,\bfg)
  \leq b_0(x) + b_1 |u|^2 + b_2 |\bfg|^2,
  $$
  for some $a_0, b_0 \in \Lone$, $a_1 \in \Ltwo$ and
  $a_2, b_1, b_2 > 0$. Assume that $g \in L^2(\Gamma_1)$ and $u_0 \in
  \Hone$.

  Then for each $h > 0$ there exist minimizers
  $(u_h, \uhat_h) \in \iota_h(u_0) + U_h$ of the HDG variational
  problem \eqnref{:vph}. If $\{(u_h, \uhat_h)\}_{h>0}$ are minimizers
  of \eqnref{:vph}, then there exists a subsequence and minimizer
  $u \in \Hone$ of \eqnref{:vp} such that
  $$
  I_h(u_h,\uhat_h) \rightarrow I(u), \quad
  u_h \rightarrow u, \, \text{ in } \Ltwo, \quad
  \mbfG_h(u_h,\uhat_h) \weak \nabla u
  \, \text{ in } \Ltwo^d, \quad
   \lranglenorm{u_h - \uhat_h} \rightarrow 0.
   $$
   In addition, if $f(.,.,.)$ is strictly convex in its third argument
   then $\mbfG_h(u_h,\uhat_h) \rightarrow \nabla u$ in $\Ltwo^d$ and
   the whole sequence converges.
\end{theorem}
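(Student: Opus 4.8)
The plan is to run the direct method twice: once at fixed $h$ to produce discrete minimizers, and once across $h$ in the spirit of $\Gamma$--convergence to identify their limit. For fixed $h$ the space $\calP_k(\calT_h)\times\calP_k(\delTh)$ is finite dimensional, so it suffices to check that $I_h$ is continuous and coercive on the affine slice $\iota_h(u_0)+U_h$. Continuity follows from Lemma \ref{lem:Nemytskii}: the quadratic upper bound on $f$ makes $(w,\bfg)\mapsto\int_\Omega f(.,w,\bfg)$ continuous from $\Ltwo\times\Ltwo^d$, and $(u_h,\uhat_h)\mapsto\mbfG_h(u_h,\uhat_h)$ and $(u_h,\uhat_h)\mapsto\langle u_h-\uhat_h\rangle$ are continuous. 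For coercivity I would insert the lower bound, so that $I_h(u_h,\uhat_h)\ge \half\langle u_h-\uhat_h\rangle^2+a_2\ltwo{\mbfG_h(u_h,\uhat_h)}^2+\int_\Omega(a_0+a_1|u_h|)+\int_{\Gamma_1}g\uhat_h$. The first two terms dominate $\min(\half,a_2)\,\norm{(u_h,\uhat_h)}_{U_h}^2$; the Poincar\'e inequality of Lemma \ref{lem:HDGproperties} bounds $\ltwo{u_h}$ and its trace inequality bounds $\norm{\uhat_h}_{L^2(\Gamma_1)}$ by $\ltwo{u_h}+\norm{(u_h,\uhat_h)}_{U_h}$, so the terms $\int_\Omega a_1|u_h|$ and $\int_{\Gamma_1}g\uhat_h$ grow at most linearly and are absorbed. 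Since $\norm{.}_{U_h}$ is a genuine norm on $U_h$ when $|\Gamma_0|>0$ and $\iota_h(u_0)$ is fixed, $I_h$ is coercive on the slice and a minimizer exists.

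For the passage $h\to0$ I would first establish a recovery (limsup) bound. Fix any admissible $v\in u_0+U$ and test with $\iota_h(v)$, which lies in $\iota_h(u_0)+U_h$ by linearity of $\iota_h$. Lemma \ref{lem:GhDensity} gives $\langle v_h-\vhat_h\rangle\le C\sqrt h\,\ltwo{\nabla v}\to0$, $v_h\to v$ in $\Ltwo$, and $\mbfG_h(\iota_h(v))\to\nabla v$ in $\Ltwo^d$, while the boundary projection $\vhat_h\to v$ in $L^2(\Gamma_1)$; the strong continuity of the Nemytskii operator (Lemma \ref{lem:Nemytskii}) then yields $I_h(\iota_h(v))\to I(v)$. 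Minimality of $(u_h,\uhat_h)$ gives $I_h(u_h,\uhat_h)\le I_h(\iota_h(v))$, whence $\limsup_{h}I_h(u_h,\uhat_h)\le I(v)$ for every admissible $v$, and so $\limsup_h I_h(u_h,\uhat_h)\le\inf I$. Taking $v=u_0$ shows the energies are bounded above, and the coercivity estimate above then delivers a uniform bound on $\ltwo{u_h}+\norm{(u_h,\uhat_h)}_{U_h}$ (uniform since the constants depend only on the aspect ratio of a regular family).

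Compactness (Lemma \ref{lem:HDGproperties}) now furnishes a subsequence and $\ut\in\Hone$ with $u_h\to\ut$ in $\Ltwo$, $\mbfG_h(u_h,\uhat_h)\weak\nabla\ut$ in $\Ltwo^d$, and $\uhat_h|_{\partial\Omega}\weak\ut|_{\partial\Omega}$ in $L^2(\partial\Omega)$. The Dirichlet constraint in \eqnref{:vph} makes $\uhat_h|_{\Gamma_0}$ the projection of $u_0$, which converges strongly to $u_0|_{\Gamma_0}$, so the weak boundary limit forces $\ut|_{\Gamma_0}=u_0|_{\Gamma_0}$ and $\ut$ is admissible for \eqnref{:vp}. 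For the matching liminf bound I would discard the nonnegative jump term, apply Theorem \ref{thm:lsc} to the volume integral (its hypothesis holds with $q=2$, $a=0$, $c=-\frac{1}{2}$, $b=a_0-\frac{1}{2}|a_1|^2\in\Lone$, since $a_0+a_1|u|+a_2|\bfg|^2\ge b-\frac{1}{2}|u|^2$ and $u_h\to\ut$ strongly), and pass to the limit in the linear boundary term by weak convergence, obtaining $\liminf_h I_h(u_h,\uhat_h)\ge I(\ut)$. Combining the two bounds gives $I(\ut)\le\inf I$, so $\ut$ is a minimizer, the infimum is attained, and $I_h(u_h,\uhat_h)\to I(\ut)=\min I$ (for the whole sequence, by the subsequence principle). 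Feeding this energy convergence back into the splitting of $I_h$, together with $\liminf\int_\Omega f\ge\int_\Omega f(.,\ut,\nabla\ut)$ and convergence of the boundary term, squeezes $\half\langle u_h-\uhat_h\rangle^2\to0$, giving the claimed $\langle u_h-\uhat_h\rangle\to0$.

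Finally, under strict convexity in the third argument I expect the main difficulty. Energy convergence yields $\int_\Omega f(.,u_h,\mbfG_h(u_h,\uhat_h))\to\int_\Omega f(.,\ut,\nabla\ut)$. Testing convexity at the midpoint, $f(x,u_h,\frac{1}{2}(\mbfG_h+\nabla\ut))\le\frac{1}{2}f(x,u_h,\mbfG_h)+\frac{1}{2}f(x,u_h,\nabla\ut)$, the right side converges to $\int_\Omega f(.,\ut,\nabla\ut)$ (energy convergence for the first half, Nemytskii continuity for the second, as $\nabla\ut$ is fixed and $u_h\to\ut$), while Theorem \ref{thm:lsc} applied to the left side gives $\liminf\ge\int_\Omega f(.,\ut,\nabla\ut)$; hence the convexity defect integrates to zero. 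The hard part is upgrading this to strong convergence: with merely strict (not uniformly) convex $f$ one first obtains $\mbfG_h\to\nabla\ut$ in measure, and the quadratic growth bounds together with the energy convergence must then be used to secure equi--integrability of $|\mbfG_h|^2$ and thus strong $\Ltwo$ convergence, via a Visintin/Young--measure type argument. Convergence of the whole sequence follows by the subsequence principle once the limiting gradient is unique; pinning down this uniqueness (immediate if $I$ is itself convex, but more delicate under strict convexity in $\bfg$ alone, where I would use that $\nabla u$, hence $u$ through its $\Gamma_0$--trace and the Poincar\'e inequality, is determined) is the point I would scrutinize most carefully.
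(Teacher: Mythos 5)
Your proposal is correct and, for everything the paper's proof actually establishes, follows the same route: the direct method at fixed $h$ (Nemytskii continuity plus coercivity from the lower bound together with the Poincar\'e and trace inequalities of Lemma \ref{lem:HDGproperties}), a recovery bound using $\iota_h(v)$ and Lemma \ref{lem:GhDensity} to obtain $\limsup_h \inf I_h \leq \inf_U I$, then the compactness of Lemma \ref{lem:HDGproperties} and weak lower semicontinuity (Theorem \ref{thm:lsc}) to conclude $I_h(u_h,\uhat_h)\rightarrow \min I$, $u_h \rightarrow u$ in $\Ltwo$, $\mbfG_h(u_h,\uhat_h)\weak \nabla u$, and $\langle u_h-\uhat_h\rangle\rightarrow 0$; your bookkeeping (discard the jump term for the liminf, then squeeze it with energy convergence) is equivalent to the paper's (carry the jump term through the liminf). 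In several places you are more careful than the paper: you verify the minorant hypothesis of Theorem \ref{thm:lsc} (taking $c=-\tfrac12$, $b=a_0-\tfrac12|a_1|^2\in\Lone$, which is needed since $a_1$ may change sign), you check that $\iota_h(v)\in\iota_h(u_0)+U_h$ by linearity, and you argue explicitly that $\ut|_{\Gamma_0}=u_0|_{\Gamma_0}$ via strong convergence of the boundary projections against the weak boundary limit. The genuine difference lies in the last sentence of the theorem: the paper's proof simply stops after $\langle u_h - \uhat_h\rangle \rightarrow 0$ and never proves strong convergence $\mbfG_h(u_h,\uhat_h)\rightarrow\nabla u$ under strict convexity, nor the whole-sequence claim, beyond invoking \cite{DaBe08} at the outset for uniqueness of the minimizer. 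Your sketched route (midpoint convexity plus lower semicontinuity to kill the convexity defect, convergence in measure, then equi-integrability of $|\mbfG_h(u_h,\uhat_h)|^2$ from the growth bounds and energy convergence, and Vitali's theorem) is a standard and completable way to fill this, and your final worry is well placed: since $f$ is not assumed convex in $u$, strict convexity in the third argument alone does not obviously give a unique minimizer (nor even a unique minimizing gradient --- double-well dependence on $u$ is compatible with the stated growth bounds), so whole-sequence convergence genuinely hinges on a uniqueness assertion that the paper outsources to the cited reference. In short, your proof subsumes the paper's and correctly isolates the one claim that neither argument fully establishes.
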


\begin{proof}
  The hypotheses guarantee that the variational problem \eqnref{:vp}
  has minimizers in $U$, and if $f(.,.,.)$ is strictly convex in
  its third argument then the minimizer is unique \cite{DaBe08}.
  
  If $u \in \Hone$, setting $(u_h, \uhat_h) = \iota_h(u)$ gives
  $$
  I_h(u_h, \uhat_h)
  = \half \lranglenorm{u_h - \uhat_h}
  + \int_\Omega f(., u_h, \mbfG_h(u_h, \uhat_h))
  + \int_{\Gamma_1} g(., \uhat_h).
  $$
  The upper bounds on the integrands and strong convergence of $u_h$,
  $\mbfG_h(u_h, \uhat_h)$, and $\uhat_h|_{\Gamma_1}$ guaranteed by Lemma
  \ref{lem:GhDensity} establish the hypotheses of the Nemytskii Lemma
  \ref{lem:Nemytskii} so that
  $$
  \lim_{h \rightarrow 0} I_h(u_h, \uhat_h)
  = \int_\Omega f(.,u,\nabla u) + \int_{\Gamma_1} g \,u
  = I(u).
  $$
  It follows that
  \begin{equation} \label{eqn:limInf}
    \lim_{h\rightarrow 0} \inf_{(u_h, \uhat_h) \in U_h} I_h(u_h, \uhat_h)
    \leq \inf_{u \in U} I(u).
  \end{equation}
  
  For functions $(u_h, \uhat_h) \in U_h$, the lower bounds on
  (coercivity of) the integrands show
  \begin{eqnarray*}
    I_h(u_h, \uhat_h)
    &\geq& \half \lranglenorm{u_h - \uhat_h}
           - \left( \lone{a_0} + \norm{\alpha_0}_{L^1(\Gamma_1)} \vph\right) \\
    && - \left( \ltwo{a_1} \ltwo{u_h}
    + \norm{\alpha_1}_{L^2(\Gamma_1)} \norm{\uhat_h}_{L^2(\Gamma_1)}
       \vph\right)
       + a_2 \ltwo{\mbfG_h(u_h,\uhat_h)}^2 \\
    &\geq& \half \lranglenorm{u_h - \uhat_h}
           - C(\epsilon)
           - \epsilon \left( \ltwo{u_h}^2 + \norm{\uhat_h}^2_{L^2(\Gamma_1)}
           \vph\right)
           + a_2 \ltwo{\mbfG_h(u_h,\uhat_h)}^2 \\
    &\geq& \half \min(1/2,a_2) \norm{(u_h,\uhat_h)}^2_{U_h} - C, 
  \end{eqnarray*}
  where the last line follows from the trace and Poincar\'e inequality
  of Lemma \ref{lem:HDGproperties} and an appropriate choice of
  $\epsilon > 0$. Since $U_h$ is finite dimensional it is immediate
  that minimizers of $I_h: U_h \rightarrow \Re$ exist, since it is
  continuous and coercive, and bounded sets of $U_h$ are compact.

  Letting $\{(u_h,\uhat_h)\}_{h > 0}$ be minimizers of $I_h(.)$ on
  $U_h$, it follows that $\{\norm{(u_h,\uhat_h)}_{U_h}\}_{h>0}$ is
  bounded, and the compactness properties of Lemma
  \ref{lem:HDGproperties} then show that there exists $\ut \in \Hone$
  and a subsequence with
  $$
  u_h \rightarrow \ut \,\, \text{ in } \Ltwo, \quad
  \mbfG_h(u_h,\uhat_h) \rightharpoonup \nabla \ut \,\,
  \text{ in } \Ltwo^d, \quad
  \uhat_h|_{\Gamma_1} \weak \ut|_{\Gamma_1}
  \,\, \text{ in } L^2(\partial\Gamma_1).
  $$
  It follows that $\ut \in U$, and (along the subsequence)
  \begin{eqnarray*}
    \lim_{h \rightarrow 0} I_h(u_h, \uhat_h) 
    &=& \lim_{h \rightarrow 0} \left(
        \half \lranglenorm{u_h - \uhat_h}
        + \int_\Omega f(.,u_h, \mbfG_h(u_h,\uhat_h))
        + \int_{\Gamma_1} g \, \uhat_h \right) \\
    &\geq& \liminf_{h \rightarrow 0}
           \half \lranglenorm{u_h - \uhat_h}
           + \int_\Omega f(.,\ut, \nabla \ut)
           + \int_{\Gamma_1} g \, \ut \\
    &\geq& \liminf_{h \rightarrow 0}
           \half \lranglenorm{u_h - \uhat_h}
           + I(\ut),
  \end{eqnarray*}
  where the middle line follows from the lower semi--continuity
  guaranteed by Theorem \ref{thm:lsc} and continuity of the boundary
  integral under weak convergence. Combining this estimate with the
  lower bound \eqnref{:limInf} gives $\ut \in U$, and
  $$
  \liminf_{h \rightarrow 0}
  \half \lranglenorm{u_h - \uhat_h}
  + I(\ut)
  \leq \lim_{h\rightarrow 0} I_h(u_h, \uhat_h)
  \leq \inf_{u \in U} I(u).
  $$
  It follows that
  $ I_h(u_h, \uhat_h) \rightarrow I(\ut) = \inf_{u \in U} I(u)$ and
  $\lranglenorm{u_h - \uhat_h} \rightarrow 0$.
\end{proof}

\begin{figure}
  \hspace*{-0.375in}
  \begin{tabular}{|l|cc|cc|cc|} \hline
    & \multicolumn{2}{c|}{\text{weight $\tau = 1$}}
    & \multicolumn{2}{c|}{\text{weight $\tau = h^{-1}$}}
    & \multicolumn{2}{c|}{\text{weight $\tau = h$}}
    \\ \hline
    \multicolumn{1}{|c|}{$h$}
    & $\ltwoh{u-u_h}$  & $\ltwoh{\bfp-\bfp_h}$
    & $\ltwoh{u-u_h}$  & $\ltwoh{\bfp-\bfp_h}$ 
    & $\ltwoh{u-u_h}$  & $\ltwoh{\bfp-\bfp_h}$
    \\ \hline
    1/4 & 1.641155 & 0.436908 & 2.784128 & 0.685960 & 0.651680 & 0.246280 \\
    1/8 & 1.004204 & 0.276075 & 2.743754 & 0.658023 & 0.336693 & 0.147604 \\
    1/16 & 0.571905 & 0.161664 & 2.685806 & 0.632281 & 0.365050 & 0.094772 \\
    1/32 & 0.317957 & 0.089354 & 2.715682 & 0.636153 & 0.416482 & 0.050470 \\
    1/64 & 0.165965 & 0.050218 & 2.719005 & 0.635747 & 0.452205 & 0.032064 \\
    1/128 & 0.084683 & 0.029143 & 2.722563 & 0.636480 & 0.480743 & 0.021351 \\
    \hline
  \end{tabular}
  \caption{Piecewise constant HDG approximation of solution to
    equation \eqnref{:NotSmooth}.}
\label{fig:hdg00}
\end{figure}

\begin{figure}
  \begin{center}
    \includegraphics[width=2.5in]{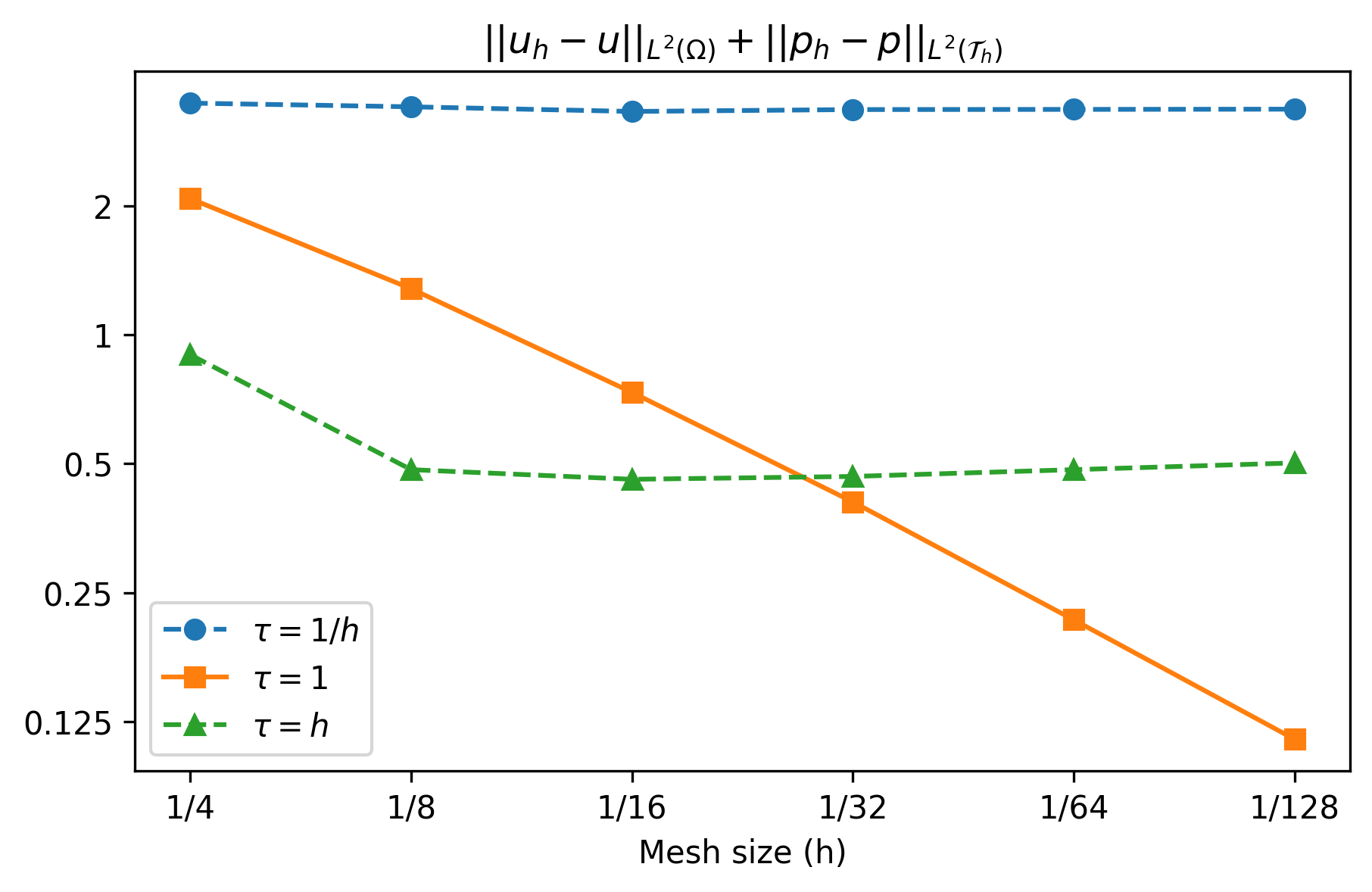}
    \qquad \qquad
    \includegraphics[width=2.5in]{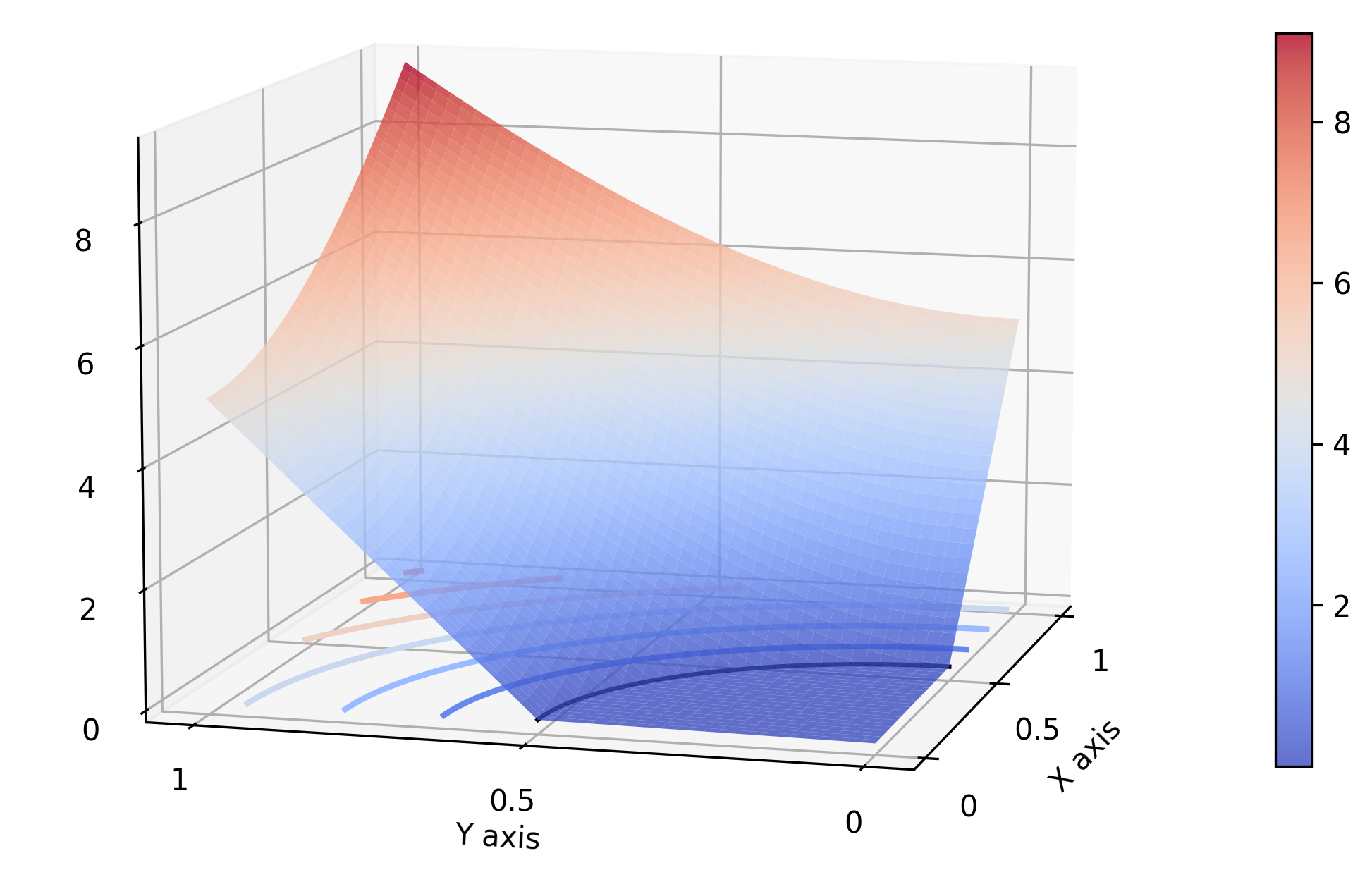}
  \end{center}
  \caption{$L^{2}(\calT_h)$ errors for different weights (left) and
   the exact solution (right).} \label{fig:egRates}
\end{figure}

\section{Numerical Example}
This section illustrates the convergence of the HDG scheme for a
problem with non--smooth solution. The piecewise smooth function
$$
u(x) = 
\begin{cases} 
  \alpha |x|, & |x| \leq 1/2, \\
  \alpha/2 + \beta (|x| - 1/2), & |x| \geq 1/2,
\end{cases}
$$
is a solution of
\begin{equation} \label{eqn:NotSmooth}
-\rmdiv(A \nabla u) = \frac{- \alpha \beta}{|x|} \quad \text{ in } (0,1)^2,
\quad \text{ with } \quad
A = 
\begin{cases} 
  \beta I, & |x| \leq 1/2, \\
  \alpha  I, & |x| > 1/2.
\end{cases}
\end{equation}
For the numerical experiments, Dirchlet data is specfied on the top
and right hand sides of the square and Neumann data on the bottom and
left, and we select $\alpha = 1/10$ and $\beta = 10$.

Figures \ref{fig:hdg00} tabulates the errors for solutions computed
using the piecewise constant HDG schemes on uniform meshes with
weights $\tau = 1$, $1/h$, and $h$. Figure \ref{fig:egRates}  plots the
exact solution and the dependence of the errors on mesh size. As in
Example \ref{eg:smooth}, $u_h$ and $\bfp_h$ both converge only when
the unit weight is utilized.

\appendix

\section{Proofs of Lemmas}
The following lemma relating integrals over $k \in \delTh$ and
$K \in \calT_h$ is used ubiquitously in the analysis of DG schemes.

\begin{lemma} \label{lem:scaleK}
  Let $K \subset \Re^d$ be a simplex with inscribing ball
  $B(x_0,\rho_K)$ and set $r_K = \max_{x \in K} |x-x_0|$ .
\begin{enumerate}
\item If $u \in H^1(K)$ then
  $$
  \rho_K \int_{\partial K} u^2
  = \int_K \left(d \, u^2 + (x-x_0).\nabla u^2 \vph\right)
  \leq \int_K \left( (d+1)u^2 + r_K^2 |\nabla u|^2 \vph\right).
  $$

\item If $u_h \in \calPell(\calT_h)$ then exists a constant $C > 0$
  depending only upon $(r_K/\rho_K)$ and $k$ such that
  $$
  \norm{u_h}_{L^2(\partial K)}
  \leq \left(C / \sqrt{\rho_K} \right) \norm{u_h}_{L^2(K)}.
  $$
\end{enumerate}
\end{lemma}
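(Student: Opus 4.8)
The plan is to derive part (1) from the divergence theorem applied to a judiciously chosen vector field, and then obtain part (2) by combining part (1) with a standard polynomial inverse estimate.

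For part (1), I would introduce the vector field $\bfF(x) = (x - x_0)\, u^2(x)$ on $K$. Since $\rmdiv(x - x_0) = d$ and $\nabla(u^2) = 2u\nabla u$, the product rule gives $\rmdiv \bfF = d\,u^2 + (x-x_0)\cdot\nabla(u^2)$. Applying the divergence theorem on $K$ yields $\int_K \rmdiv\bfF = \int_{\partial K}(x-x_0)\cdot\bfn\,u^2$, where $\bfn$ is the outward unit normal. The essential geometric observation --- and the one step that uses the hypothesis on the inscribing ball --- is that $(x-x_0)\cdot\bfn = \rho_K$ for every $x \in \partial K$: each facet of $K$ lies in a supporting hyperplane tangent to $B(x_0,\rho_K)$, so the distance from the incenter $x_0$ to that hyperplane equals the inradius $\rho_K$, and since $x_0$ is interior while $\bfn$ points outward, this signed distance is exactly $(x-x_0)\cdot\bfn$. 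Consequently $\int_{\partial K}(x-x_0)\cdot\bfn\,u^2 = \rho_K\int_{\partial K} u^2$, which is the stated equality.

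The inequality in part (1) is then routine. Writing $(x-x_0)\cdot\nabla(u^2) = 2u\,(x-x_0)\cdot\nabla u$ and applying Cauchy--Schwarz followed by Young's inequality $2ab \le a^2 + b^2$ bounds this term by $u^2 + |x-x_0|^2|\nabla u|^2$; since $|x-x_0|\le r_K$ on $K$, it is at most $u^2 + r_K^2|\nabla u|^2$. Combining with the $d\,u^2$ term and integrating produces $\rho_K\int_{\partial K}u^2 \le \int_K (d+1)u^2 + r_K^2|\nabla u|^2$.

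For part (2), I would apply part (1) with $u = u_h$ and eliminate the gradient term using an inverse estimate for polynomials. Mapping $K$ to a fixed reference simplex $\Khat$ by the affine map $x = B\hat x + b$ and using that all norms on the finite-dimensional space $\calP_k(\Khat)$ are equivalent, one gets $\|\nabla\hat u\|_{L^2(\Khat)}\le \hat C\|\hat u\|_{L^2(\Khat)}$ with $\hat C$ depending only on $k$ and $d$; together with the standard bound $\norm{B^{-1}}\le \hat h/\rho_K$ this yields $\norm{\nabla u_h}_{L^2(K)} \le (C/\rho_K)\norm{u_h}_{L^2(K)}$, $C = C(k,d)$. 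Hence $r_K^2\norm{\nabla u_h}_{L^2(K)}^2 \le C^2(r_K/\rho_K)^2\norm{u_h}_{L^2(K)}^2$, and substituting into the inequality from part (1) gives $\rho_K\norm{u_h}_{L^2(\partial K)}^2 \le \bigl[(d+1)+C^2(r_K/\rho_K)^2\bigr]\norm{u_h}_{L^2(K)}^2$; dividing by $\rho_K$ and taking square roots yields the claim with a constant depending only on $r_K/\rho_K$ and $k$. I expect the only genuinely delicate point to be the geometric identity $(x-x_0)\cdot\bfn = \rho_K$ on $\partial K$; everything else reduces to the product rule, the divergence theorem, and standard scaling. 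It is worth emphasizing that this identity --- and hence the clean appearance of the single factor $\rho_K$ --- is exactly what fails for a general base point and is the reason the incenter is the natural choice.
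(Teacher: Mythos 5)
Your proof is correct. The paper states this lemma in the appendix without proof (it is invoked as a standard scaling result ``used ubiquitously in the analysis of DG schemes''), so there is no in-paper argument to compare against; your derivation --- the divergence theorem applied to $\bfF = (x-x_0)u^2$ together with the incenter identity $(x-x_0)\cdot\bfn = \rho_K$ on $\partial K$, then Young's inequality for the bound in part (1), and the affine-scaling inverse estimate $\norm{\nabla u_h}_{L^2(K)} \leq (C/\rho_K)\norm{u_h}_{L^2(K)}$ for part (2) --- is precisely the canonical argument the lemma rests on, with the only unstated (but routine) point being the density of $C^\infty(\bar K)$ in $H^1(K)$ needed to justify the divergence theorem and the identification of the trace of $u^2$ with the square of the trace of $u$.
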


\subsection{Crouzeix Raviart Space}
Various trace and compactness properties of functions in the Crouzeix
Raviart space can be found in
\cite{Br03,ErGu04,TemamNavierStokes}. The ideas
introduced in Brenner \cite{Br03} facilitate a unified
proof of these results.

\begin{proof}[\textbf{Proof of Lemma \ref{lem:CRproperties}}]
  Brenner \cite{Br03} constructed operators
  $$
  E:CR(\calT_h) \rightarrow \calP_d(\calT_h) \cap \Hone
  \quad \text{ and } \quad
  F: \calP_d(\calT_h) \cap \Hone \rightarrow CR(\calT_h)
  $$
  satisfying 
  $F(E(u_h)) = u_h$, and 
  \begin{align*}
  \ltwo{E(u_h)} \leq C \ltwo{u_h}, \qquad
  \ltwo{\nabla E(u_h)} \leq C \ltwoh{\nabla u_h},  \qquad
  &u_h \in CR(\calT_h), \\
  \ltwo{E(u_h) - u_h} 
  + \sqrt{h} \norm{E(u_h) - u_h}_{L^2(\partial \Omega)} 
  \leq C h \ltwoh{\nabla u_h},
  \qquad & u_h \in CR(\calT_h).
  \end{align*}
  and if $w_h \in \calP_d(\calT_h) \cap \Hone$,
  $\ltwoh{\nabla F(w_h)} \leq C \ltwo{\nabla w_h}$,
  \begin{align*}
    \lp{F(w_h)} &\leq C \lp{w_h}, && 1 \leq p \leq \frac{2d}{d-2}, \\
    \norm{F(w_h)}_{L^p(\partial \Omega)}
    &\leq C \norm{w_h}_{L^p(\partial \Omega)}, 
    && 1 \leq p \leq \frac{2(d-1)}{d-2}.
  \end{align*}
  These last two inequalities only stated for $p = 2$ in
  \cite{Br03}; however, their proofs follow
  directly from a parent element calculation. In addition
  $$
  E(u_h) = F(u_h) = u_h, \qquad u_h \in \calP_1(\calT_h) \cap \Hone.
  $$
  
  \begin{enumerate}
  \item (Embedding) Letting $1 \leq p \leq 2 d / (d-2)$ ($p < \infty$
    if $d=2$), it follows from the Sobolev embedding theorem that
    for all $u_h \in CR(\calT_h)$,
    $$
    \lp{u_h} = \lp{F(E(u_h))}
    \leq C \lp{E(u_h)}
    \leq C \hone{E(u_h)}
    \leq C \norm{u_h}_{H^1(\calT_h)}.
    $$
    Similarly, if $1 \leq p \leq 2 (d-1)/(d-2)$, 
    $$
    \norm{u_h}_{L^p(\partial \Omega)}
    = \norm{F(E(u_h))}_{L^p(\partial \Omega)}
    \leq C \norm{E(u_h)}_{L^p(\partial \Omega)}
    \leq C \hone{E(u_h)}
    \leq C \norm{u_h}_{H^1(\calT_h)}.
    $$

  \item (Poincar\'e) Brenner \cite{Br03} proved the
    Poincar\'e inequality for a broad class of DG spaces which
    included subspaces of $CR(\calT_h)$ with averages vanishing
    on subsets $\Gamma_0 \subset \partial \Omega$.

  \item (Compactness) If $\{u_h\}_{h > 0} \subset CR(\calT_h)$
    and $\{\norm{u_h}_{H^1(\calT_h)}\}_{h > 0}$ is bounded, then
    so too is $\{\hone{E(u_h)}\}_{h > 0}$. It follows that
    upon passing to a subsequence there exists $u \in \Hone$
    with $E(u_h) \weak u$ in $\Hone$. Since $\Hone$
    is compactly embedded in $\Ltwo$ it follows that
    $\ltwo{E(u_h) - u} \rightarrow 0$ and
    \begin{eqnarray*}
      \ltwo{u_h - u}
      &\leq& \ltwo{u_h - E(u_h)} + \ltwo{E(u_h) - u} \\
      &\leq& C \ltwoh{\nabla u_h} h + \ltwo{E(u_h) - u} \\
      &\rightarrow& 0. \\
      \norm{u_h - u}_{L^2(\partial \Omega)}
      &\leq&
      \norm{u_h - E(u_h)}_{L^2(\partial \Omega)}
      + \norm{E(u_h) - u}_{L^2(\partial \Omega)}\\
      &\leq&
      C \ltwoh{\nabla u_h} \sqrt{h}
      + \norm{E(u_h) - u}_{L^2(\partial \Omega)}\\
      &\rightarrow& 0.
    \end{eqnarray*}
    To verify that (the broken) gradients \{$\nabla u_h\}_{h>0}$
    converge weakly to $\nabla u$, note first that they are
    bounded in $\Ltwo$ so, upon passage to a subsequence, have a weak
    limit, $\bfg \in \Ltwo$. To verify that $\bfg = \nabla u$, let
    $\bfq \in C_c^\infty(\Omega)^d$ and $\bfq_h \in RT_0(\calT_h)$ be
    the Raviart Thomas interpolant of $\bfq$. Then
    \begin{eqnarray*}
      (\nabla u_h,\bfq_h)
      &=& - (u_h, \rmdiv(\bfq_h))
          + \langle u_h, \bfq_h \cdot \bfn \rangle \\
      &=& - (u_h, \rmdiv(\bfq_h))
          + \sum_{k \in \delTh \cap \Omega} \int_k [u_h] \, \bfq_h. \bfn
          + \int_{\partial\Omega} u_h \, \bfq_h.\bfn \\
      &=& - (u_h, \rmdiv(\bfq_h));
    \end{eqnarray*}
    the last line following since $\bfq_h.\bfn$ is constant on the
    faces $k \in \delTh$ and vanishes on $\partial \Omega$. Since
    $\bfq_h \rightarrow \bfq$ in $\Hdiv$ and $u_h \rightarrow u$ in
    $\Ltwo$ it follows that $\nabla u_h \weak \nabla u$.
  \end{enumerate}
\end{proof}

\subsection{Discrete Distributional Derivative}

The lifting
$\mbfG_h: \Honeh \times L^2(\delTh) \rightarrow \calPell(\calT_h)^d$
introduced in Definition \ref{def:Gh} is local in the sense that
$\mbfG_h|_K$ is determined by $v|_K$ and $\vhat|_{\partial K}$ for
each $K \in \calT_h$. In order to establish global quantities, such as
the Poincar\'e inequality, it is convenient to couple $\mbfG_h$ with the
lifting into the Crouzeix Raviart space introduced in Definition \ref{def:Lh}.

\begin{lemma} \label{lem:Ghproperties}
  Let $\calT_h$ be a triangulation of a bounded Lipschtiz domain
  $\Omega \subset \Re^d$.
  \begin{enumerate}
  \item If $(v_h, \vhat_h) \in \calPell(\calT_h) \times L^2(\delTh)$,
    then $\nabla \calL_h(\vhat_h)$ is the projection of
    $\mbfG_h(v_h,\vhat_h)$ onto $\calP_0(\calT_h)^d$ in $\Ltwo^d$.
    In particular,
    $\norm{\nabla \calL_h(\vhat_h)}^2_{L^2(\calT_h)} \leq
    \ltwo{\mbfG_h(v_h,\vhat_h)}^2$, and if $\ell = 0$ then
    $ \nabla \calL_h(\vhat_h) = \mbfG_h(v_h,\vhat_h)$.
    
  \item If
    $(v_h,\vhat_h) \in \calPell(K) \times \calPell(\partial K)$, 
    then there exists $C > 0$ depending only upon the aspect ratio
    of $K$ such that
    $$
    h_K \norm{\nabla v_h}_{L^2(K)}^2 \leq C
    \left(h_K \norm{\mbfG_h(v_h,\vhat_h)}_{L^2(K)}^2
    + \lranglenormElement{\vhat_h - v_h}^2
    \right).
    $$
  
  \item If
    $(v_h,\vhat_h) \in \calPell(K) \times \calPell(\partial K)$, 
    then there exists $C > 0$ depending only upon the aspect ratio
    of $K$ such that
    $$
    \norm{v_h - \calL_h(\vhat_h)}_{L^2(K)}
    \le C \left( h_K \norm{\mbfG_h(v_h,\vhat_h)}_{L^2(K)}
      + \sqrt{h_K} \lranglenormElement{\vhat_h - v_h} \vph\right) .
    $$
    
  \item If $(v_h, \vhat_h) \in \calPell(\calT_h) \times L^2(\delTh)$, then
    there exists $C > 0$ depending only upon the aspect ratio of
    $\calT_h$ such that
    $$
    \honeh{\calL_h(\vhat_h)} \leq C \left(
      \ltwo{\mbfG_h(v_h,\vhat_h)}
      + \sqrt{h} \lranglenorm{v_h - \vhat_h}^{1/2}
      + \ltwo{v_h} \vph\right).
    $$
  \end{enumerate}
\end{lemma}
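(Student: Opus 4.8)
The plan is to prove the four assertions in order, as each feeds the next; the crux is the projection identity (1), from which the remaining local bounds follow by scaling. For (1) I would restrict the characterization of Definition \ref{def:Gh} to $\bfq_h \in \calP_0(\calT_h)^d$. On each $K$ an integration by parts gives $(\nabla v_h, \bfq_h)_K = \langle v_h, \bfq_h\cdot\bfn\rangle_{\partial K}$ since $\bfq_h$ is constant there, so after summing the $v_h$ boundary contributions cancel and $(\mbfG_h(v_h,\vhat),\bfq_h) = \langle \vhat, \bfq_h\cdot\bfn\rangle$. Because $\bfq_h\cdot\bfn$ is constant on every face $k\in\delTh$, the defining degrees of freedom $\int_k\calL_h(\vhat)=\int_k\vhat$ allow $\vhat$ to be replaced by $\calL_h(\vhat)$, and Lemma \ref{lem:crGrad} then identifies $\langle \calL_h(\vhat),\bfq_h\cdot\bfn\rangle$ with $(\nabla\calL_h(\vhat),\bfq_h)$. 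Hence $(\mbfG_h(v_h,\vhat),\bfq_h)=(\nabla\calL_h(\vhat),\bfq_h)$ for all $\bfq_h\in\calP_0(\calT_h)^d$, which is precisely the claim that $\nabla\calL_h(\vhat)$ is the $\Ltwo$-projection of $\mbfG_h(v_h,\vhat)$; the norm bound and the equality when $k=0$ are immediate.

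For (2) I would test the local identity of Definition \ref{def:Gh} with $\bfq_h=\nabla v_h\in\calP_{k-1}(K)^d\subset\calP_k(K)^d$, obtaining $\norm{\nabla v_h}_{L^2(K)}^2 = (\mbfG_h(v_h,\vhat_h),\nabla v_h)_K - \langle\vhat_h-v_h,\nabla v_h\cdot\bfn\rangle_{\partial K}$. Cauchy--Schwarz bounds the first term by $\norm{\mbfG_h(v_h,\vhat_h)}_{L^2(K)}\norm{\nabla v_h}_{L^2(K)}$; for the boundary term I would apply the polynomial inverse-trace inequality of Lemma \ref{lem:scaleK}(2) to $\nabla v_h$, giving $\norm{\nabla v_h}_{L^2(\partial K)}\le (C/\sqrt{h_K})\norm{\nabla v_h}_{L^2(K)}$ (using $\rho_K\sim h_K$ for bounded aspect ratio). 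Dividing by $\norm{\nabla v_h}_{L^2(K)}$, squaring, and multiplying by $h_K$ yields the stated inequality.

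Part (3) is where I expect the main obstacle: the gradient estimates of (1)--(2) only control the mean-zero part of $w_h:=v_h-\calL_h(\vhat_h)$, so the constant mode must be handled separately through the boundary data. I would split $\norm{w_h}_{L^2(K)}\le\norm{w_h-w_\partial}_{L^2(K)}+\norm{w_\partial}_{L^2(K)}$, where $w_\partial=|\partial K|^{-1}\int_{\partial K}w_h$. A parent-element calculation (the map vanishes on constants) gives $\norm{w_h-w_\partial}_{L^2(K)}\le Ch_K\norm{\nabla w_h}_{L^2(K)}$, and $\norm{\nabla w_h}_{L^2(K)}$ is controlled by combining (1) (so $\norm{\nabla\calL_h(\vhat_h)}_{L^2(K)}\le\norm{\mbfG_h(v_h,\vhat_h)}_{L^2(K)}$) with (2). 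The decisive point is the constant term: since $\calL_h$ preserves face averages, $\int_{\partial K}w_h=\int_{\partial K}(v_h-\vhat_h)$, so Cauchy--Schwarz with $|K|/|\partial K|\sim h_K$ gives $\norm{w_\partial}_{L^2(K)}\le C\sqrt{h_K}\,\langle v_h-\vhat_h\rangle_{\partial K}$. Collecting terms and factoring out $\sqrt{h_K}$ produces the claimed bound.

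Finally, (4) follows by squaring (3), summing over $K\in\calT_h$, and absorbing the harmless factors $h_K\le h\le C$, so that $\ltwo{v_h-\calL_h(\vhat_h)}\le C(\ltwo{\mbfG_h(v_h,\vhat_h)}+\langle v_h-\vhat_h\rangle)$; the triangle inequality $\ltwo{\calL_h(\vhat_h)}\le\ltwo{\calL_h(\vhat_h)-v_h}+\ltwo{v_h}$ then gives the result (with $\ltwo{v_h}$ in place of the $\ltwo{u_h}$ written in the statement, which appears to be a typo).
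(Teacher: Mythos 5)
Your proposal is correct and follows essentially the same route as the paper's proof: part (1) via integration by parts against constant test fields, face-average preservation of $\calL_h$, and Lemma \ref{lem:crGrad}; part (2) via a clever test function in Definition \ref{def:Gh} together with the inverse trace estimate of Lemma \ref{lem:scaleK}; part (3) via the boundary-average splitting with $\int_{\partial K}(v_h-\calL_h(\vhat_h))=\int_{\partial K}(v_h-\vhat_h)$; and part (4) by summing (3) over elements and applying the triangle inequality. The only cosmetic difference is in (2), where you test with $\nabla v_h$ directly while the paper tests with $\mbfG_h(v_h,\vhat_h)-\nabla v_h$ and concludes by the triangle inequality, and you correctly identify that $\ltwo{u_h}$ in the statement of (4) should read $\ltwo{v_h}$.
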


\begin{proof}
  \begin{enumerate}
  \item If $\bfq_h \in \calP_0(\calT_h)^d$ then the broken divergence
    vanishes, $\rmdiv(\bfq_h) = 0$, whence
    \begin{eqnarray*}
      (\mbfG_h(v_h, \vhat), \bfq_h)
      &\equiv& (\nabla v_h, \bfq_h)
               + \langle \vhat_h - v_h, \bfq_h.\bfn \rangle \\
      &=&  \langle \vhat_h, \bfq_h.\bfn \rangle
          = \langle \calL_h(\vhat_h), \bfq_h.\bfn \rangle
          = (\nabla \calL_h(\vhat_h), \bfq_h).
    \end{eqnarray*}
    
  \item Fixing $K \in \calT_h$ and selecting the test function
    in the definition of $\mbfG_h(v_h,\vhat_h)$ to be
    $\bfq_h = \mbfG_h(v_h, \vhat_h) - \nabla v_h$ on $K$ and zero
    otherwise gives
  \begin{eqnarray*}
    \norm{\mbfG_h(v_h, \vhat_h) - \nabla v_h}^2_{L^2(K)}
    &=&(\mbfG_h(v_h, \vhat_h) - \nabla v_h,
        \mbfG_h(v_h, \vhat_h) - \nabla v_h)_{L^2(K)} \\
    &=&\int_{\partial K}(\vhat_h - v_h)(\mbfG_h(v_h, \vhat_h)
        - \nabla v_h).\bfn\\
    &\leq& 
    \lranglenormElement{\mbfG_h(v_h, \vhat_h) - \nabla v_h}\,
    \lranglenormElement{\vhat_h - v_h}
    \\
    &\leq& (C / \sqrt{h_K})
    \norm{\mbfG_h(v_h, \vhat_h) - \nabla v_h}_{L^2(K)}
    \lranglenormElement{\vhat_h - v_h},
  \end{eqnarray*}
  where the last line follows from Lemma \ref{lem:scaleK}.  It follows
  that
  $$
  h_K \norm{\mbfG_h(v_h, \vhat_h) - \nabla v_h}^2_{L^2(K)} \leq C
    \lranglenormElement{\vhat_h - v_h}^2,
  $$
  and the stated bound follows from the triangle inequality.
  
\item If $u_h \in H^1(K)$ let $u_\partial \in \calP_0(\calT_h)$ be the
    function taking the average over the boundary of each element,
    $$
    u_\partial|_K = \frac{1}{|\partial K|} \int_{\partial K} u_h,
    \qquad \text{ in particular, } \qquad
    (\calL_h(\vhat_h) - v)_\partial
    = \frac{1}{|\partial K|} \int_{\partial K} (\vhat_h-v).    
    $$
    An application of Hölder's inequality shows
    $$
    \norm{\left(\calL_h(\vhat_h) - v_h\right)_\partial}^2_{L^2(K)} 
    \leq \frac{|K|}{|\partial K|} \lranglenormElement{\vhat_h - v_h}^2  
    \leq  C h_K \lranglenormElement{\vhat_h - v_h}^2 .
    $$
    Since $u_h \mapsto u_h - u_\partial$ vanishes when
    $u_h \in \calP_0(\calT_h)$, a parent element calculation shows
    $ \norm{u_h - u_\partial}^2_{L^2(K)} \leq C \norm{\nabla
      u_h}^2_{L^2(K)} h^2_K$. Then
    \begin{eqnarray*}
    \norm{\calL_h(\vhat_h) - v_h}_{L^2(K)}^2
    &\leq& C \left(\norm{(\calL_h(\vhat_h) - v_h)
           - (\calL_h(\vhat_h) - v_h)_\partial}^2_{L^2(K)}
           + \norm{\left(\calL_h(\vhat_h) - v_h \right)_\partial}_{L^2(K)}^2
           \vph \right) \\
    &\leq & C\left(
            h_K^2 \norm{\nabla \big(\calL_h(\vhat_h) - v_h)}^2_{L^2(K)}
            + h_K \lranglenormElement{\vhat_h - v_h}^2 
            \right)  \\
    &\leq & C\left(
            h_K^2 \norm{\nabla \calL_h(\vhat_h)}^2_{L^2(K)}
            + h_K^2 \norm{\nabla v_h}^2_{L^2(K)}
            + h_K \lranglenormElement{\vhat_h - v_h}^2 
            \right)  \\
    &\leq& C 
           \left(h_K ^2 \norm{\mbfG_h (v,\vhat)}^2_{L^2(K)}
           + h_K \lranglenormElement{\vhat_h - v_h}^2 \vph\right),
  \end{eqnarray*}
  where the last line follows from the estimates in steps (1) and (2).

\item The Crouzeix Raviart lifting can be estimated using the
  bounds from steps (1) and (3),
  \begin{eqnarray*}
    \norm{\calL_h(\vhat_h)}_{H^1(\calT_h)}
    &\leq& \ltwoh{\calL_h(\vhat_h)} + \ltwoh{\nabla \calL_h(\vhat_h)} \\
    &\leq& \ltwo{v_h - \calL_h(\vhat_h)} + \ltwo{v_h}
           + \ltwo{\mbfG_h(v_h,\vhat_h)} \\
    &\leq& C\left( \ltwo{v_h}
           + \ltwo{\mbfG_h(v_h,\vhat_h)}
           + \sqrt{h} \lranglenorm{\vhat_h - v_h}^{1/2}  \vph\right).
  \end{eqnarray*}
\end{enumerate}
\end{proof}

\begin{proof}[\textbf{Proof of Lemma \ref{lem:HDGproperties}}]
  \begin{enumerate}
  \item (Embedding) Fix $p = 2d/(d-1)$ and let
    $(u_h, \uhat_h) \in \calPell(\calT_h) \times \calPell(\delTh)$.
    Using the inverse estimate for $L^p$ norms on $\calPell(K)$
    \cite[Lemma 4.5.3]{BrSc08}, we compute
    \begin{eqnarray*}
      \norm{u_h}_{L^p(K)}
      &\leq& \norm{u_h - \calL_h(\uhat_h)}_{L^p(K)}
             + \norm{\calL_h(\uhat_h)}_{L^p(K)} \\
      &\leq& C \left(
             h_K^{\frac{d}{p} - \frac{d}{2}}
             \norm{u_h - \calL_h(\uhat_h)}_{L^2(K)}
             + \norm{\calL_h(\uhat_h)}_{L^p(K)} \vph\right) \\
      &\leq& C \Big(
             h_K^{\frac{1}{2} + \frac{d}{p} - \frac{d}{2}}
             \left(\sqrt{h_K} \norm{\bfG_h(u_h,\uhat_h)}_{L^2(K)}
             + \lranglenorm{ \uhat_h - u_h}^{1/2}_{\partial K} \vph\right)
             + \norm{\calL_h(\uhat_h)}_{L^p(K)} \Big) \\
      &\leq& C \Big(
             \norm{\bfG_h(u_h,\uhat_h)}_{L^2(K)}
             + \lranglenorm{ \uhat_h - u_h}^{1/2}_{\partial K} \vph
             + \norm{\calL_h(\uhat_h)}_{L^p(K)} \Big).
    \end{eqnarray*}
    where the second to last line follows from the bounds in Lemma
    \ref{lem:Ghproperties} and the last line following since
    $p = 2d/(d-1)$. If $p \geq 2$ then $(\sum_K a_K^p)^{1/p}
    \leq (\sum_K a_K^2)^{1/2}$ so
    \begin{eqnarray*}
      \lp{u_h}
      &\leq& C \left(
             \ltwo{\bfG_h(u_h,\uhat_h)}
             + \lranglenorm{\uhat_h - u_h}^{1/2}
             + \lp{\calL_h(\uhat_h)} \vph\right) \\
      &\leq& C \left(
             \ltwo{\bfG_h(u_h,\uhat_h)}
             + \lranglenorm{\uhat_h - u_h}^{1/2}
             + \norm{\calL_h(\uhat_h)}_{H^1(\calT_h)} \vph\right) \\
      &\leq& C \left(\ltwo{\bfG_h(u_h,\uhat_h)}
             + \lranglenorm{\uhat_h - u_h}^{1/2}
             + \ltwo{u_h} \vph\right),
    \end{eqnarray*}
    where the last two lines follow since the Crouzeix Raviart
    space embeds into $\Lp$, and the fourth bound in Lemma 
    \ref{lem:Ghproperties}.
    
    To bound the trace of $\vhat_h \in \calPell(\delTh)$, let
    $k \in K \cap \partial \Omega$ be a boundary face, and
    bound the trace on $k$ by
    \begin{eqnarray*}
      \norm{\uhat_h}_{L^2(k)}
      &\leq& \norm{\uhat_h- u_h}_{L^2(k)}
             + \norm{u_h - \calL_h(\uhat_h)}_{L^2(k)} 
             + \norm{\calL_h(\uhat_h)}_{L^2(k)} \\
      &\leq& \norm{\uhat_h- u_h}_{L^2(k)}
             + (C / \sqrt{h_K}) \norm{u_h - \calL_h(\uhat_h)}_{L^2(K)} 
             + \norm{\calL_h(\uhat_h)}_{L^2(k)},
    \end{eqnarray*}
    which follows from the inverse estimate for the trace on $K$.
    Using Lemma \ref{lem:Ghproperties} to bound the middle term
    and summing over $k \subset \partial \Omega$ shows
    \begin{eqnarray*}
      \norm{\uhat_h}_{L^2(\partial\Omega)}
      &\leq& C \left(\lranglenorm{ \uhat_h - u_h}^{1/2}
             + \sqrt{h} \ltwo{\bfG_h(u_h,\uhat_h)} \vph \right)
             + \norm{\calL_h(\uhat_h)}_{L^2(\partial\Omega)} \\
      &\leq& C \left(\lranglenorm{ \uhat_h - u_h}^{1/2}
             + \sqrt{h} \ltwo{\bfG_h(u_h,\uhat_h)} \vph 
             + \norm{\calL_h(\uhat_h)}_{H^1(\calT_h)} \right)\\
      &\leq& C \left(\lranglenorm{ \uhat_h - u_h}^{1/2}
             + \ltwo{\bfG_h(u_h,\uhat_h)}
             + \ltwo{u_h}  \vph \right),
    \end{eqnarray*}
    where the last two lines follow from the trace inequality
    for the Crouzeix Raviart space and the fourth bound
    in lemma \ref{lem:Ghproperties}.
   
  \item (Poincar\'e) When $\uhat_h$ vanishes on $\Gamma_0$, a Poincar\'e
    inequality for the HDG space can be inherited from the Crouzeix
    Raviart space. 
    \begin{eqnarray*}              
      \ltwo{u_h}
      &\leq& \ltwo{u_h - \calL_h(\uhat_h)} + \ltwo{\calL_h(\uhat_h)} \\
      &\leq& C\left(
             \sqrt{h} \ltwo{\bfG_h(u_h,\uhat_h)}
             + \lranglenorm{ \uhat_h - u_h}^{1/2}
             + \ltwoh{\nabla \calL_h(\uhat_h)} \vph\right) \\
      &\leq& C\left(
             \ltwo{\bfG_h(u_h,\uhat_h)}
             + \lranglenorm{ \uhat_h - u_h}^{1/2} \vph\right),
    \end{eqnarray*}
    where the last two lines follow from the third and first bound in
    Lemma \ref{lem:Ghproperties}.

  \item (Compactness) If
    $\{\ltwo{u_h} + \norm{(u_h,\uhat_h)}_{U_h}\}_{h > 0}$ is bounded, it
    follows from Lemma \ref{lem:Ghproperties} that
    $\{\norm{\calL_h(\uhat_h)}_{H^1(\calT_h)} \}_{h > 0}$ is also
    bounded. The compactness properties of the Crouzeix Raviart
    space given in Lemma \ref{lem:CRproperties} allow passage to
    a subsequence and $u \in \Hone$ for which
    $$
    \calL_h(\uhat_h) \rightarrow u \qquad \text{ in } \Ltwo.
    $$
    The triangle inequality and third bound in Lemma
    \ref{lem:Ghproperties} show $u_h \rightarrow u$ in $\Ltwo$

    Since $\{\ltwo{\mbfG_h(u_h,\uhat_h)}\}_{h > 0}$
    and$\{\norm{\uhat_h}_{L^2(\partial\Omega)}\}_{h > 0}$
    are bounded, upon passage to a subsequence they have weak
    limits,
    $$
    \mbfG_h(u_h,\uhat_h) \weak \bfg \quad \text{ in } \Ltwo^d,
    \quad \text{ and } \quad
    \uhat_h \weak \ut \quad \text{ in } L^2(\partial\Omega).
    $$
    To identify the limits, fix
    $\bfq \in C^\infty(\Omegabar)^d$ and select
    $\bfq_h \in C(\bar \Omega)^d \cap \pmb{P}^k(\calT_h)^d$ 
    such that $\norm{\bfq_h - \bfq}_{H^1(\calT_h)} \rightarrow 0$. Then
    \begin{eqnarray*}
      (\bfg,\bfq)
      &=& \lim_{h \rightarrow 0} (\mbfG_h(u_h,\uhat_h), \bfq_h) \\
      &\equiv& \lim_{h \rightarrow 0} \left( (\nabla u_{h}, \bfq_h)
        + \langle \uhat_{h} - u_{h}, \bfq_h \cdot \bfn \rangle \vph\right) \\
      &=& \lim_{h \rightarrow 0} \left( - (u_h, \rmdiv(\bfq_h))
        + \langle \uhat_h, \bfq_h \cdot \bfn \rangle \vph\right) \\
      &=& \lim_{h \rightarrow 0} \left( - (u_h, \rmdiv(\bfq_h))
        + \langle \uhat_h, \bfq_h \cdot \bfn \rangle_{\partial \Omega} \right)
      \\
      &=& - (u, \rmdiv(\bfq))
        + \langle \ut, \bfq \cdot \bfn \rangle_{\partial \Omega}.
    \end{eqnarray*}
    Note that all of the internal jumps vanished since $\bfq_h$ is
    continuous.
    It follows that $(\bfg, \bfq) = -(u,\rmdiv(\bfq))$ for
    all  $\bfq \in C^\infty_c(\Omega)$ whence $\bfg = \nabla u$.
    The above then shows
    $$
    (\nabla u, \bfq) + (u, \rmdiv(\bfq)) = (\ut, \bfq.\bfn)_{\partial\Omega},
    \qquad \bfq \in C^\infty(\Omegabar),
    $$
    so $\ut = u|_{\partial\Omega}$.
  \end{enumerate}
\end{proof}

\bibliography{reference}

\end{document}